\documentclass[11pt,a4paper]{article}
\usepackage[T1]{fontenc}
\usepackage{url}
\usepackage{amssymb}
\usepackage{amsmath}
\usepackage{mathrsfs}
\usepackage{hyperref}
\usepackage{nicefrac}
\usepackage{upgreek}
\usepackage[english]{babel}
\textheight=21.0cm
\textwidth=14.8cm

\evensidemargin 28pt
\oddsidemargin=\evensidemargin
 
 \setcounter{tocdepth}{1}
\newtheorem{theorem}{Theorem}[section]

\newtheorem{lemma}[theorem]{Lemma}

\newtheorem{remark}[theorem]{Remark}

\numberwithin{equation}{section} 

\newcommand{\Rot}{\operatorname{\mathbf{curl}}}
\newcommand{\Div}{\operatorname{\mathrm{div}}}

\newcommand  {\R}{{\mathbb R}}
\newcommand {\Id}{\mathrm {I}}

\newcommand  {\LL}{\boldsymbol{L}}
\newcommand  {\HH}{\boldsymbol{H}}

\newcommand{\hh}{{\boldsymbol{ h}}}
\newcommand  {\nn}{\boldsymbol{ n}}
\newcommand  {\uu}{\boldsymbol{ u}}
\newcommand  {\vv}{\boldsymbol{ v}} 

\newenvironment{proof}{\begin{trivlist}
                       \item[]{\sc Proof. }}{\hfill $\blacksquare$
                     \end{trivlist}} 
  \newcommand{\transposee}[1]{{\vphantom{#1}}{#1}^{\sf T}}    
\begin{document}

\title{On the Fr\'echet derivative in elastic obstacle scattering}
\author{
  Fr\'ed\'erique Le Lou\"er
  \thanks{ Institut f\"ur Numerische und Andgewandte Mathematik, Universit\"at G\"ottingen, 37083
   G\"ottingen, Germany,  f.lelouer@math.uni-goettingen.de }
  }
  \date{}

\maketitle

\begin{abstract}   In this paper, we investigate the existence and characterizations of the Fr\'echet derivatives of the solution to time-harmonic elastic scattering problems with respect to the boundary of the obstacle. Our analysis is based on a technique - the factorization of the difference of the far-field pattern for two different scatterers - introduced by Kress and Pa\"{i}varinta to establish Fr\'echet differentiability in acoustic scattering. For the Dirichlet boundary condition an alternative proof of a differentiability result due to Charalambopoulos is provided and new results are proven for  the Neumann and impedance exterior boundary value problems.
\end{abstract}      

\textbf{Keywords : }  Elastic scattering, Navier equation, Fr\'echet derivative, far-field pattern, Dirichlet condition, Neumann condition, impedance condition, inverse scattering.

\section{Introduction}
The  inverse obstacle scattering problem for time harmonic waves is to determine the shape of the boundary and the location of a scatterer from far field measurements of the total wave.  This problem is of practical interest in some important fields of applied physics, as for example non destructive testing in linear elasticity.
Although such an inverse problem is theoretically difficult to solve since it is ill-posed and nonlinear, one can apply numerical methods to recover an approximate solution. The use of regularized iterative methods via first order linearization requires the Fr\'echet differentiability analysis of the far-field pattern  of the solution to the forward problem with respect to  the boundary of the scatterer.  An explicit form of the first  derivative is needed in view of its implementation in iterative algorithms.

In acoustic scattering,  Fr\'echet differentiability with characterizations of the de\-rivative as the far-field pattern of  the radiating solution to a new exterior boundary value problem were investigated by Hettlich \cite{Hettlich} and Kirsch \cite{Kirsch}   via variational methods,  and by Hohage \cite{Hohage}  via the implicit function theorem.  These characterizations allow the numerical implementation of the derivatives from the knowledge of the boundary values of the scattered wave only (see \cite{HohageHarbrecht,Hohage,Kirsch}).
By the use of  boundary integral equation methods, one can express the far field  pattern of the solution to scattering problems  in terms of products of boundary integral operators with singular Schwarz kernels. The Fr\'echet differentiability analysis of the  far field was then developped
by Potthast \cite{Potthast2,Potthast1}  for the Dirichlet and Neumann acoustic  problems via the Fr\'echet differentiability analysis of the boundary integral operators involved, in the framework of H\"older continuous and differentiable function spaces. A characterization of the derivative can be obtain by directly deriving the boundary values of the solution. This approach was extended to electromagnetism by Potthast \cite{Potthast3} for the perfect conductor problem  and to elasticity by Charalambopoulos \cite{Charalambopoulos} for the Dirichlet scattering problem only.  More recently, the Frechet differentiability of the class of boundary integral operators with pseudohomogeneous  hypersingular and weakly singular kernels - which includes the usual boundary integral operators occuring in time-harmonic potential theory -  was analyzed by Costabel and Le Lou\"er \cite{CostabelLeLouer, CostabelLeLouer2,FLL}, in the framework of Sobolev spaces. The analyticity of the  integral operators with respect to the boundary is proven. As a consequence, it yields the possibility to establish the Fr\'echet differentiability of the far-field pattern for any scattering problem by the use of boundary integral representations and the chain and product rules.  In this way we obtain an additional implementable formula to compute the Fr\'echet derivatives of the far field by deriving the boundary integral operators (see \cite{IvanyshynJohansson,IvanyshynKress, KressRundell}).

  This paper is devoted to the Fr\'echet differentiability  analysis of  the far-field pattern of the solution to  elastic  obstacle  scattering problems in   three-dimensional homogeneous and isotropic media via an alternative technique introduced by Kress and Pa\"{i}varinta in \cite{KressPaivarinta} to establish Fr\'echet differentiability for sound-soft and sound-hard obstacles. It is based on repeated uses of Green's theorem and  a factorization of the difference of  the far-field pattern of the scattered wave for a fixed obstacle and a perturbed obstacle. An interesting feature of the method is that it only requires the continuous dependence of the boundary values of the solution on  the boundary in order to prove the Fr\'echet differentiabilty of the far-field pattern.
  This approach was extended to the perfect conductor problem by Kress in \cite{Kress} and  the impedance problem both in acoustic and electromagnetic scattering by  Haddar and Kress in \cite{HaddarKress}.   

  The paper is organized as follow : 
   In section 2 we recall elementary results on  time-harmonic Navier equations in Sobolev Spaces, following the notations of \cite{AlvesKress}. More details can be found in \cite{CostabelStephan,HsiaoWendland}.
  The  far field identity for elastic waves in the case of a Dirichlet boundary condition was established by Alves and Kress in \cite{AlvesKress}. In section 3, we use this identity to give an alternative proof of the differentiability result due to Charalambopoulos and we improve the boundary condition satisfied by the Fr\'echet derivative of the solution. In section 4 we apply this method - following ideas of Haddar and Kress - to  establish  the Fr\'echet differentiability of the boundary to far field operator simultaneously for the cases of   Neumann and impedance boundary conditions and again we provide a characterization of the derivative.

\section{The Navier equation}
The propagation of time-harmonic elastic waves in the three-dimensional isotropic and homogeneous elastic medium characterized by the positive Lam\'e constants $\mu$ and $\lambda$ and the density $\rho$ is described by the Navier equation
\begin{equation}\label{NE}\Div\upsigma(\uu)+ \rho\omega^2\uu=0,\end{equation}
where $\omega>0$ is the frequency. Here $$\upvarsigma(\uu)=\lambda(\Div\uu)\Id_{3}+2\mu\upvarepsilon(\uu)\quad\text{and}\quad\upvarepsilon(\uu)=\frac{1}{2}\big([\nabla\uu]+\transposee{[\nabla\uu]}\big)$$
denote the stress tensor and the strain tensor respectively. Notice that  $\Id_{3}$ is the 3-by-3 identity matrix  and $[\nabla\uu]$ is the matrix whose the $j$-th column is the gradient of the $j$-th component of $\uu$. We  set $\Updelta^*\uu:=\Div\upsigma(\uu)=\mu\Delta\uu+(\lambda+\mu)\nabla\Div\uu$.

Let $\Omega\subset\R^3$ be a bounded domain with a boundary $\Gamma$ of class $\mathscr{C}^2$ and outward unit normal vector $\nn$ and let $\Omega^c$ denote the exterior domain $\R^3\backslash\overline{\Omega}$.
We denote by $H^s(\Omega)$, $H^s_{loc}(\overline{\Omega^c})$ and $H^s(\Gamma)$ the standard (local in the case of the exterior domain) complex valued, Hilbertian Sobolev space of order $s\in\R$ defined on $\Omega$, $\overline{\Omega^c}$ and $\Gamma$ respectively (with the convention $H^0=L^2$.)  Spaces of vector functions will be denoted by boldface letters, thus $\HH^s=(H^s)^3$.   We set :
\begin{eqnarray*}\HH(\Omega,\Updelta^*)&:=&\left\{\uu\in \HH^1(\Omega):\;\Updelta^*\uu\in \LL^2(\Omega)\right\},\\\HH_{loc}(\Omega^c,\Updelta^*)&:=&\left\{\uu\in \HH^1_{loc}(\overline{\Omega^c}):\;\Updelta^*\uu\in \LL_{loc}^2(\overline{\Omega^c})\right\}.\end{eqnarray*}
The space $\HH(\Omega, \Updelta^{*})$ is an Hilbert space endowed with the natural graph norm.

We use the following traces and tangential derivatives : \begin{align*}
\frac{\partial}{\partial\nn}&=\nn\cdot\nabla\textrm{ (normal derivative)},\\ T(\nn,\partial)&=2\mu\frac{\partial}{\partial\nn}+\lambda\,\nn\Div+\mu\,\nn\times\Rot \textrm{ (traction derivative)},\\\mathcal{M}(\nn,\partial)&=\frac{\partial}{\partial\nn}-\nn\Div+\,\nn\times\Rot \textrm{ (tangential G\"unter's derivative)}.\end{align*}
The tangential gradient $\nabla_{\Gamma}$ and the surface divergence $\Div_{\Gamma}$ are defined for a scalar function $u$ and a vector function $\vv$ by the following equalities \cite{Nedelec}: 
$$\nabla u =\nabla_{\Gamma}u+\frac{\partial u}{\partial\nn}\,\nn,\quad \Div\vv=\Div_{\Gamma}\vv+\;\left(\nn\cdot\frac{\partial\vv}{\partial\nn}\right),$$
and the tangential G\"unter's derivative can be rewritten as follow: \begin{equation}\label{Guntertg}\mathcal{M}\vv=[\nabla_{\Gamma}\vv]\nn-(\Div_{\Gamma}\vv)\nn.\end{equation}
We note that, due to the trace lemma, $\uu_{|\Gamma}\in\HH^{\frac{1}{2}}(\Gamma)$ for $\uu\in\HH(\Omega,\Updelta^*)\cup\HH_{loc}(\Omega^c,\Updelta^*)$. The normal derivative $\frac{\partial}{\partial\nn}\uu_{|\Gamma}$ and the traction derivative $T\uu_{|\Gamma}$ are both defined as distributions in $\HH^{-\frac{1}{2}}(\Gamma)$ via the first Green formula (see \cite{CostabelStephan, Kupradze, Nedelec} and lemma \ref{Green}).

For two $(3\times3)$ matrices $A$ and $B$ whose columns are denoted by $(a_{1},a_{2},a_{3})$ and $(b_{1},b_{2},b_{3})$, respectively, we set $A:B= a_{1}\cdot b_{1}+a_{2}\cdot b_{2}+a_{3}\cdot b_{3}$. The following lemma is a consequence of the Gauss divergence theorem and the identity \begin{equation}\label{idtt1}\Div\big(\upsigma(\uu)\cdot\vv\big)=\Updelta^*\uu\cdot\vv+\upsigma(\uu):\upvarepsilon(\vv).\end{equation}

\begin{lemma} \label{Green} For  vector functions $\uu$ and $\vv$  in $\HH(\Omega,\Delta^*)$,  it holds the first Green formula \begin{equation}\label{IPP1}\int_{\Omega}\Updelta^*\uu\cdot\vv \,dx=\int_{\Gamma}T\uu\cdot \vv\, ds-\int_{\Omega}\upsigma(\uu):\upvarepsilon(\vv)\,dx.\end{equation}
 The symmetry of the product $\upsigma(\uu):\upvarepsilon(\vv)=\lambda(\Div\uu)\cdot(\Div\vv)+2\mu\,\upvarepsilon(\uu):\upvarepsilon(\vv)=\upsigma(\vv):\upvarepsilon(\uu)$ yields the second Green formula 
\begin{equation}\label{IPP2}\int_{\Omega}\left(\uu\cdot\Updelta^*\vv-\Updelta^*\uu\cdot\vv\right)dx=\int_{\Gamma}\big(\uu\cdot T\vv-T\uu\cdot\vv\big)ds.\end{equation}
If $\uu$ and $\vv$ solve the Navier equation in $\Omega$ then each term in \eqref{IPP2} vanish.\end{lemma}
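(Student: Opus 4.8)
The plan is to obtain the first Green formula \eqref{IPP1} as the integrated form of the pointwise product rule \eqref{idtt1}, and then to read off the second formula \eqref{IPP2} from the symmetry of the bilinear expression $\upsigma(\uu):\upvarepsilon(\vv)$. I would first treat the regular case, say $\uu,\vv\in\HH^2(\Omega)$, for which \eqref{idtt1} holds as an identity of $\LL^2(\Omega)$ functions. Integrating \eqref{idtt1} over $\Omega$ and applying the Gauss divergence theorem to the left-hand side gives
$$\int_\Gamma\big(\upsigma(\uu)\cdot\vv\big)\cdot\nn\,ds=\int_\Omega\Updelta^*\uu\cdot\vv\,dx+\int_\Omega\upsigma(\uu):\upvarepsilon(\vv)\,dx,$$
so that everything reduces to identifying the boundary integrand with $T\uu\cdot\vv$.

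The key step is the algebraic identity $\upsigma(\uu)\,\nn=T\uu$ on $\Gamma$. Since $\upsigma(\uu)$ is symmetric, $(\upsigma(\uu)\vv)\cdot\nn=(\upsigma(\uu)\nn)\cdot\vv$, and from $\upsigma(\uu)=\lambda(\Div\uu)\Id_3+2\mu\upvarepsilon(\uu)$ one computes
$$\upsigma(\uu)\,\nn=\lambda(\Div\uu)\nn+\mu[\nabla\uu]\nn+\mu\,\transposee{[\nabla\uu]}\nn.$$
The identification with $T(\nn,\partial)\uu=2\mu\frac{\partial\uu}{\partial\nn}+\lambda\,\nn\Div\uu+\mu\,\nn\times\Rot\uu$ then follows from the vector calculus identities $\transposee{[\nabla\uu]}\nn=\frac{\partial\uu}{\partial\nn}$ and $[\nabla\uu]\nn=\frac{\partial\uu}{\partial\nn}+\nn\times\Rot\uu$, the latter being a short computation with components. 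This turns the boundary term into $\int_\Gamma T\uu\cdot\vv\,ds$ and establishes \eqref{IPP1}. Writing \eqref{IPP1} once more with the roles of $\uu$ and $\vv$ exchanged, subtracting, and cancelling the volume terms via the symmetry $\upsigma(\uu):\upvarepsilon(\vv)=\upsigma(\vv):\upvarepsilon(\uu)$ gives \eqref{IPP2}. The final assertion is then immediate: if both fields solve \eqref{NE} we have $\Updelta^*\uu=-\rho\omega^2\uu$ and $\Updelta^*\vv=-\rho\omega^2\vv$, so the integrand $\uu\cdot\Updelta^*\vv-\Updelta^*\uu\cdot\vv$ vanishes identically, forcing the boundary term to vanish as well.

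The main obstacle is the low regularity of the stated hypothesis: for $\uu,\vv\in\HH(\Omega,\Updelta^*)$ one only controls $\Updelta^*\uu\in\LL^2(\Omega)$ and not the full second-order derivatives, so $\upsigma(\uu)\nn$ has no classical trace and $T\uu$ exists only as an element of $\HH^{-\frac12}(\Gamma)$. I would resolve this by the standard density argument: $\mathscr{C}^\infty(\overline{\Omega})$ fields are dense in $\HH(\Omega,\Updelta^*)$ for the graph norm, the volume integrals in \eqref{IPP1} are continuous for that norm, and the boundary pairing $\int_\Gamma T\uu\cdot\vv\,ds$ is read as the duality $\langle T\uu,\vv\rangle_{\HH^{-\frac12}(\Gamma),\HH^{\frac12}(\Gamma)}$, which is precisely how $T\uu$ is defined by continuous extension. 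Some care is needed here to avoid circularity: the smooth-case computation is exactly what legitimises the definition of $T\uu$ through \eqref{IPP1}, after which passing to the limit delivers both Green formulas on the full space $\HH(\Omega,\Updelta^*)$.
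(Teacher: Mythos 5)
Your proposal is correct and follows exactly the route the paper indicates: integrate the pointwise identity \eqref{idtt1} over $\Omega$, apply the Gauss divergence theorem, identify the boundary integrand via the symmetry of $\upsigma(\uu)$ and the relation $\upsigma(\uu)\nn=T\uu$, deduce \eqref{IPP2} from the symmetry of $\upsigma(\uu):\upvarepsilon(\vv)$, and extend to $\HH(\Omega,\Updelta^*)$ by density with the boundary term read as the $\HH^{-\frac12}(\Gamma)\times\HH^{\frac12}(\Gamma)$ duality. The extra details you supply (the computation matching $\upsigma(\uu)\nn$ with the definition of $T(\nn,\partial)$, and the remark on non-circularity of defining $T\uu$ through \eqref{IPP1}) are accurate and consistent with the paper's conventions.
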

  Further we will use the following different representations of the traction operator $T$ :
\begin{eqnarray}\label{Tn1}T\uu=\upsigma(\uu)\cdot\nn&=&2\mu \mathcal{M}\uu+(\lambda+2\mu)(\Div\uu)\nn-\mu\nn\times\Rot\uu\\\label{Tn2}&=&\mu \left(\frac{\partial \uu}{\partial\nn}+\mathcal{M}\uu\right)+(\lambda+\mu)(\Div\uu)\nn\\\label{Tn3}&=&(\lambda+2\mu)\frac{\partial\uu}{\partial\nn}-\lambda\mathcal{M}\uu+(\lambda+\mu)\nn\times\Rot\uu.
\end{eqnarray}

Now we assume that the domain $\Omega$ has a connected boundary $\Gamma$.  In the sequel we are concerned with the following exterior boundary value problems for elastic waves: Given vector densities $f\in \HH^{\frac{1}{2}}(\Gamma)$ and $g\in\HH^{-\frac{1}{2}}(\Gamma)$, find a solution $\uu\in \HH_{loc}(\Omega^c,\Updelta^*)$ to the Navier equation \eqref{NE} in $\Omega^c$ which satisfies either a Dirichlet boundary condition 
\begin{equation}\label{dirichlet}\uu=f\quad\text{on }\Gamma\end{equation}
 or an impedance boundary condition 
\begin{equation}\label{impedance}T\uu+i\alpha\omega\sqrt{\rho} \, \uu=g\quad\text{on }\Gamma.\end{equation}The impedance coefficient $\alpha$ is assumed to be a real non negative constant. The case $\alpha=0$ yields the Neumann boundary condition. In addition the field $\uu$ has to satisfy the Kupradze radiation condition 
\begin{equation*}\label{radiation}\lim_{r\to\infty}r\left(\frac{\partial \uu_{p}}{\partial r}-i\kappa_{p}\uu_{p}\right)=0,\qquad\lim_{r\to\infty}r\left(\frac{\partial \uu_{s}}{\partial r}-i\kappa_{s}\uu_{s}\right)=0,\quad r=|x|,\end{equation*}
uniformly in all directions. Here, the longitudinal wave is given by $\uu_{p}=-\kappa^{-2}_{p}\nabla\Div \uu$ and  the transversal wave is given by $\uu_{s}=\uu-\uu_{p}$ associated with the respective wave numbers $\kappa_{p}$ and $\kappa_{s}$ given by
$$\kappa_{p}^2=\frac{\rho\omega^2}{\lambda+2\mu},\quad\kappa_{s}^2=\frac{\rho\omega^2}{\mu}.$$ 
Solutions of the Navier equation satisfying the Kupradze radiation condition are called radiating solution.

The fundamental solution of the Navier equation is given by $$\Phi(x,y)=\frac{1}{\mu}\left(\frac{e^{i\kappa_{s}|x-y|}}{4\pi|z|}\cdot\Id_{\R^3}+\frac{1}{\kappa_{s}^2}\nabla_{x}\transposee{\nabla}_{x}\left(\frac{e^{i\kappa_{s}|x-y|}}{4\pi|x-y|}-\frac{e^{i\kappa_{p}|x-y|}}{4\pi|x-y|}\right)\right).$$
It is a $3\times3$ matrix and for $j=1,2,3$ we denote by $\Phi_{j}$ the $j$-th column of $\Phi$. We have $\Phi(x,y)=\transposee{\Phi(x,y)}=\Phi(y,x)$.
From the second integral theorem \eqref{IPP2}, for a radiating solution $\uu\in\HH_{loc}(\Omega^c,\Updelta^*)$ to the Navier equation \eqref{NE}, one can derive the Somigliana integral representation formula for $x\in\Omega^c$:
\begin{equation}\label{intrep}\uu(x)=\int_{\Gamma}\left(\transposee{\left[T_{y}\Phi(x,y)\right]}\uu(y)-\Phi(x,y)T_{y}\uu(y)\right)ds(y),\end{equation}
where $T_{y}=T(\nn(y),\partial_{y})$ and $T_{y}\Phi(x,y)$ is the tensor obtained by applying the traction operator $T_{y}$ to each column of $\Phi(x,y)$.
For existence and uniqueness of a solution to the above boundary-value problems via boundary integral equation we refer to Kupradze \cite{Kupradze}.
The radiation condition implies that the solution  has an asymptotic behavior of the form
$$\uu(x)=\frac{e^{i\kappa_{p}|x|}}{|x|}\uu_{p}^{\infty}(\hat{x})+\frac{e^{i\kappa_{s}|x|}}{|x|}\uu_{s}^{\infty}(\hat{x})+O\left(\frac{1}{|x|}\right),\quad |x|\rightarrow\infty,$$
uniformly in all directions $\hat{x}=\dfrac{x}{|x|}$. The fields $\uu_{p}^{\infty}$ and $\uu^{\infty}_{s}$ are defined on the unit sphere $S^2$ in $\R^3$ and known as the longitudinal and the transversal far-field pattern, respectively. We introduce the $\LL^2$-spaces
\begin{eqnarray*}\LL^2_{s}(S^2)&=&\{\hh\in\LL^2(S^2);\;\hh(\hat{x})\cdot\hat{x}=0\},\\
\LL^2_{p}(S^2)&=&\{\hh\in\LL^2(S^2);\;\hh(\hat{x})\times\hat{x}=0\}.
\end{eqnarray*}
We have $\uu^{\infty}_{s}\in\LL^2_{s}(S^2)$ and $\uu^{\infty}_{p}\in\LL^2_{p}(S^2)$.

\section{The exterior  Dirichlet boundary value problem}\label{Dirichlet}
The scattering problem of time-harmonic waves by a bounded obstacle $\Omega$ leads to special cases of the above boundary value problems. In this section we consider the rigid body problem. The total displacement field $\uu+\uu^{i}$ is given by the superposition of the incident field $\uu^{i}$, which we assume to be an entire solution of the Navier equation, and the scattered field $\uu$, which solves the Navier equation in $\Omega^c$, the Dirichlet boundary  condition  $$\uu+\uu^{i}=0\quad\text{on }\Gamma,$$ and satisfies the Kupradze radiation condition. 

For $x\in\Omega^c$ let $W$ be the $3\times3$ matrix whose the $j$-th column $W_{j}$ is the radiating solution of \eqref{NE} and \eqref{dirichlet} for the boundary value
$$f=-\Phi_{j}(x,\cdot)\quad\text{on }\Gamma,$$
and set $V(x,\cdot)=\Phi(x,\cdot)+W(x,\cdot)$, that is $W$ and $V$ are the scattered and the total field, respectively, for the scattering of a point source located at $x\in\Omega^c$. We note that $V$ satisfies the reciprocity
$$V(x,y)=\transposee{[V(y,x)]},\quad x,y\in\Omega^c, x\not=y,$$
which can be derived from the second Green formula \eqref{IPP2}, the Somigliana integral representation formula \eqref{intrep} and the symmetry of the fundamental solution. 

\begin{lemma}\label{intrepdir} The unique radiating solution $\vv\in\HH_{loc}(\Omega^c,\Updelta^*)$  of the Navier equation \eqref{NE} satisfying the boundary condition \eqref{dirichlet} for any $f\in\HH^{\frac{1}{2}}(\Gamma)$ admits the following integral representation
\begin{equation}\vv(x)=\int_{\Gamma}\transposee{\left[T_{y}V(x,y)\right]}f(y)ds(y),\quad x\in\Omega^c.
\end{equation}
\end{lemma}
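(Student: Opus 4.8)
The plan is to start from the Somigliana representation formula \eqref{intrep} for the radiating solution $\vv$ and then to eliminate the unknown traction $T_{y}\vv$ on $\Gamma$ by exploiting the auxiliary point-source fields $W$ and $V$. Since $\vv=f$ on $\Gamma$ by \eqref{dirichlet}, formula \eqref{intrep} reads
\begin{equation*}\vv(x)=\int_{\Gamma}\transposee{[T_{y}\Phi(x,y)]}f(y)\,ds(y)-\int_{\Gamma}\Phi(x,y)\,T_{y}\vv(y)\,ds(y),\quad x\in\Omega^c.\end{equation*}
The first term already has the desired shape, so the whole difficulty is to rewrite the second term, which still involves the unknown Neumann datum $T_{y}\vv\in\HH^{-\frac12}(\Gamma)$, in terms of the known boundary value $f$ alone. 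The columns $W_{j}(x,\cdot)$ of $W$ are designed exactly for this: each is a radiating solution of \eqref{NE} in $\Omega^c$ with $W_{j}=-\Phi_{j}(x,\cdot)$ on $\Gamma$.

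The key tool would be an exterior analogue of the second Green formula for two radiating solutions. Applying \eqref{IPP2} on the truncated domain $\Omega^c\cap\{|y|<R\}$, whose boundary consists of $\Gamma$ (with reversed normal) together with the sphere $\{|y|=R\}$, and letting $R\to\infty$, the contribution of the large sphere vanishes by virtue of the Kupradze radiation condition (after splitting into the longitudinal and transversal parts $\uu_{p},\uu_{s}$ and using their respective Sommerfeld-type conditions, which make the cross terms cancel to leading order). This yields, for any two radiating solutions $\uu$ and $\vv$ of \eqref{NE} in $\Omega^c$, the identity $\int_{\Gamma}\big(\uu\cdot T_{y}\vv-T_{y}\uu\cdot\vv\big)\,ds=0$. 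I would then apply this to the pair $\uu=W_{j}(x,\cdot)$ and $\vv$ and insert the boundary conditions $W_{j}=-\Phi_{j}(x,\cdot)$ and $\vv=f$ on $\Gamma$ to obtain
\begin{equation*}\int_{\Gamma}\Phi_{j}(x,y)\cdot T_{y}\vv(y)\,ds(y)=-\int_{\Gamma}T_{y}W_{j}(x,y)\cdot f(y)\,ds(y),\quad j=1,2,3.\end{equation*}

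It remains to reassemble these three scalar identities into matrix form. Using the symmetry $\Phi=\transposee{\Phi}$, the left-hand side is precisely the $j$-th component of $\int_{\Gamma}\Phi(x,y)\,T_{y}\vv(y)\,ds(y)$, while the right-hand side is the $j$-th component of $-\int_{\Gamma}\transposee{[T_{y}W(x,y)]}f(y)\,ds(y)$. Substituting the resulting identity for the second term back into the Somigliana formula and recalling $V=\Phi+W$ gives exactly $\vv(x)=\int_{\Gamma}\transposee{[T_{y}V(x,y)]}f(y)\,ds(y)$, as claimed; uniqueness is already guaranteed by the well-posedness of \eqref{NE}--\eqref{dirichlet} quoted from Kupradze \cite{Kupradze}. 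I expect the main obstacle to be the rigorous justification of the vanishing far-sphere term and, relatedly, the assignment of a precise distributional meaning to all the boundary integrals: since $T_{y}\vv$ and $T_{y}W_{j}$ only lie in $\HH^{-\frac12}(\Gamma)$, the pairings above must be read as $\HH^{-\frac12}(\Gamma)$--$\HH^{\frac12}(\Gamma)$ duality brackets, and the exterior Green identity must first be established for smooth solutions and then extended by the density and continuity afforded by Lemma \ref{Green}.
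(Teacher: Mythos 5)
Your proposal is correct and follows essentially the same route as the paper: both start from the Somigliana representation, apply the exterior second Green identity to the pair of radiating solutions $W$ and $\vv$ (with the sphere contribution killed by the Kupradze radiation condition), and then use the boundary conditions of $W$ and $\vv$ together with $V=\Phi+W$ and the symmetry of $\Phi$. The only difference is presentational (you argue column by column and are more explicit about the far-sphere limit and the $\HH^{-\frac12}$--$\HH^{\frac12}$ duality pairings, which the paper leaves implicit).
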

\begin{proof}From the second Green formula \eqref{IPP2} on $\Omega^c$ for the radiating solutions $W$ and $\vv$ we can write
$$\int_{\Gamma}\left(\transposee{\left[T_{y}W(x,y)\right]}\vv(y)-\transposee{[W(x,y)]}T_{y}\vv(y)\right)ds(y)=0,$$
for all $x\in\Omega^c$.
Using the boundary condition for $W$ and $\vv$, the symmetry of $\Phi$ and the Somigliana integral representation formula for $\vv$ we obtain
\begin{eqnarray*}\label{intrep}\vv(x)&=&\int_{\Gamma}\left(\transposee{\left[T_{y}\Phi(x,y)\right]}\vv(y)+\transposee{[W(x,y)]}T_{y}\vv(y)\right)ds(y)\\&=&\int_{\Gamma}\left(\transposee{\left[T_{y}\Phi(x,y)\right]}\vv(y)+\transposee{\left[T_{y}W(x,y)\right]}\vv(y)\right)ds(y)\\&=&\int_{\Gamma}\transposee{\left[T_{y}V(x,y)\right]}\vv(y)ds(y)=\int_{\Gamma}\transposee{\left[T_{y}V(x,y)\right]}f(y)ds(y).\end{eqnarray*}
\end{proof}

For a fixed incident field $\uu^{i}$, we consider the boundary to far field operator $$F:\Gamma\mapsto \uu^{\infty}=(\uu^{\infty}_{s},\uu^{\infty}_{p})\in\LL_{s}^2(S^2)\times\LL^2_{p}(S^2)$$ which maps the boundary of the rigid  scatterer $\Omega$ onto the far-field patterns $\uu^{\infty}_{s}$ and $\uu^{\infty}_{p}$ of the scattered field $\uu$. In order to describe the dependence of the operator $F$ on the shape of the boundary $\Gamma$, we choose a fixed reference domain $\Omega$ and we consider  variations generated by transformations of the form $$x\mapsto x+\theta(x)$$ of point $x$ in the space $\R^3$, where $\theta$ is a smooth vector function defined in the neighborhood of $\Gamma$. The functions $\theta$ are assumed to be  sufficiently small  elements of the Banach space $\mathscr{C}^{2}(\Gamma,\R^3)$ in order that $(\Id+\theta)$ is a diffeomorphism from $\Gamma$ to  $$\Gamma_{\theta}=(\Id+\theta)\Gamma=\left\{x+\theta(x); \;x\in\Gamma\right\},$$ so that the surface $\Gamma_{\theta}$ is a connected boundary of class $\mathscr{C}^2$ of a domain $\Omega_{\theta}$. The mapping $\mathcal{F}:\theta\mapsto F(\Gamma_{\theta})$ is well defined in the neighborhood of the zero function in $\mathscr{C}^2(\Gamma,\R^3)$. We then analyze the Fr\'echet differentiability of $\mathcal{F}$ at $\theta=0$.
We want to prove the existence of a linear and continuous mapping $\mathcal{F}'(0):\mathscr{C}^2(\Gamma)\rightarrow\LL_{s}^2(S^2)\times\LL^2_{p}(S^2)$ such that we have the following expansion in $\LL_{s}^2\times\LL^2_{p}$
$$\mathcal{F}(\xi)-\mathcal{F}(0)=\mathcal{F}'(0)\xi +o(||\xi||_{\mathscr{C}^2}),\quad\text{when }||\xi||_{\mathscr{C}^2}\to0.$$
To this end, since we can interchange the differentiation with respect to the boundary and the passing to the limit $|x|\to\infty$, we will establish the Fr\'echet differentiability of the scattered field away from the boundary $\Gamma$. 


By $\nn_{\theta}$ we denote the exterior unit normal vector to $\Gamma_{\theta}$ and, in what follows, we will distinguish the quantities related to the exterior Dirichlet scattering problem for the domain $\Omega_{\theta}$ through the subscript $\theta$.
We use the following identity established by Alves and Kress  in \cite{AlvesKress} pp. 13.
\begin{lemma}\label{identity}Assume that $\overline{\Omega}\subset\Omega_{\theta}$. Then
\begin{equation}\label{id}\uu_{\theta}(x)-\uu(x)=-\int_{\Gamma_{\theta}}\transposee{[V(x,y)]}T_{y}\big(\uu_{\theta}(y)+\uu^{i}(y)\big)ds(y),\end{equation}
for all $x\in\Omega_{\theta}^c$.
\end{lemma}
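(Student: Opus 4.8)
The plan is to represent both $\uu_\theta(x)$ and $\uu(x)$ as integrals over the \emph{same} contour $\Gamma_\theta$ against the columns of the reference Green kernel $V(x,\cdot)$, and then to combine the two representations by means of the boundary conditions. The geometric hypothesis $\overline{\Omega}\subset\Omega_\theta$ is what makes this possible: it gives $\Omega_\theta^c\subset\Omega^c$, so that the reference scattered field $\uu$, the matrix $W(x,\cdot)$, and hence $V(x,\cdot)=\Phi(x,\cdot)+W(x,\cdot)$, are all defined and solve the Navier equation on $\overline{\Omega_\theta^c}$ away from $y=x$, while $V(x,y)=0$ for $y\in\Gamma$. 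Moreover, for $x\in\Omega_\theta^c$ the kernel $V(x,\cdot)$ is regular on the closed annulus $\overline{A}$, where $A=\Omega_\theta\setminus\overline{\Omega}$, and on $\Gamma_\theta$.

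First I would establish, for any radiating solution $\vv$ of the Navier equation in $\Omega_\theta^c$ and any $x\in\Omega_\theta^c$, the representation
\[
\vv(x)=\int_{\Gamma_\theta}\big(\transposee{[T_yV(x,y)]}\vv(y)-\transposee{[V(x,y)]}T_y\vv(y)\big)\,ds(y).
\]
To obtain it I apply the Somigliana integral representation formula to $\vv$ in $\Omega_\theta^c$, which produces the same expression with $\Phi$ in place of $V$, and then add the Betti identity coming from the second Green formula \eqref{IPP2} applied to each column $W_j(x,\cdot)$ and to $\vv$ on $\Omega_\theta^c$: since both are radiating solutions of the Navier equation and $W_j(x,\cdot)$ is regular throughout $\Omega_\theta^c$, the integrals over large spheres vanish and the remaining boundary term over $\Gamma_\theta$ is zero. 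Summing the two replaces $\Phi$ by $V=\Phi+W$. Taking $\vv=\uu_\theta$ yields the representation of $\uu_\theta(x)$.

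Next I would express $-\uu(x)$ as an integral over $\Gamma_\theta$ by exploiting the entire incident field $\uu^{i}$. Applying the second Green formula \eqref{IPP2} to each column $V_j(x,\cdot)$ and to $\uu^{i}$ on the bounded annulus $A=\Omega_\theta\setminus\overline{\Omega}$, where both are regular solutions of the Navier equation (as $x\notin\overline{A}$), gives a boundary identity over $\partial A=\Gamma_\theta\cup\Gamma$. On the inner boundary $V(x,\cdot)$ vanishes, so the surviving contribution over $\Gamma$ reduces to $\int_\Gamma\transposee{[T_yV(x,y)]}\uu^{i}(y)\,ds(y)$, which by Lemma \ref{intrepdir} applied with the Dirichlet datum $f=-\uu^{i}_{|\Gamma}$ of the reference field $\uu$ equals $-\uu(x)$. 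Hence the annulus identity reads $\int_{\Gamma_\theta}\big(\transposee{[T_yV]}\uu^{i}-\transposee{[V]}T_y\uu^{i}\big)\,ds=-\uu(x)$. Adding this equation to the representation of $\uu_\theta(x)$ from the previous step, both right-hand sides being integrals over $\Gamma_\theta$, yields
\[
\uu_\theta(x)-\uu(x)=\int_{\Gamma_\theta}\big(\transposee{[T_yV(x,y)]}(\uu_\theta+\uu^{i})-\transposee{[V(x,y)]}T_y(\uu_\theta+\uu^{i})\big)\,ds(y).
\]
Finally, the Dirichlet condition $\uu_\theta+\uu^{i}=0$ on $\Gamma_\theta$ for the perturbed scatterer annihilates the first term, leaving exactly the claimed identity \eqref{id}.

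The main obstacle is the careful bookkeeping of orientations and signs. The traction operator $T$ is linear in the normal, so passing between the outward normal $\nn_\theta$ on $\Gamma_\theta$ and the outward normal $\nn$ on $\Gamma$, and between the orientations induced as inner boundary of $\Omega_\theta^c$ and as inner or outer boundary of the annulus $A$, must be done consistently with the conventions fixed in \eqref{IPP2} and in the Somigliana formula, since an error there changes the sign of whole boundary integrals. The second delicate point, used implicitly when discarding the integrals over spheres of radius $R\to\infty$, is the elastic analogue of the vanishing of the far-field reciprocity integral for two radiating fields; it relies on the decomposition into longitudinal and transversal parts together with the Kupradze radiation condition, and I would invoke it as the standard exterior-domain counterpart of the last statement in Lemma \ref{Green}.
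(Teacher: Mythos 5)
Your proof is correct. It uses the same toolkit as the paper --- repeated applications of the second Green formula \eqref{IPP2} in $\Omega_{\theta}^c$ and in the annulus $\Omega_{\theta}\setminus\overline{\Omega}$, the Kupradze radiation condition to discard the integrals over large spheres, and the Dirichlet data of $V$ on $\Gamma$ and of $\uu_{\theta}+\uu^{i}$ on $\Gamma_{\theta}$ --- but it organizes the factorization differently. You split the difference by \emph{field}: a Somigliana representation of $\uu_{\theta}$ over $\Gamma_{\theta}$ with the total kernel $V$ kept intact, plus an annulus identity for the pair $(V,\uu^{i})$ whose $\Gamma$-contribution you identify with $-\uu(x)$ by citing Lemma \ref{intrepdir} with the datum $f=-\uu^{i}_{|\Gamma}$; the perturbed boundary condition is then applied exactly once, at the very end, to remove the $\transposee{[T_{y}V]}$ term. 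The paper splits by \emph{kernel}: it first establishes \eqref{rep1}, namely $\uu(x)=-\int_{\Gamma}\Phi(x,y)T_{y}(\uu+\uu^{i})\,ds(y)$, together with its analogue \eqref{adjintrep} for $\uu_{\theta}$ over $\Gamma_{\theta}$, then shows through a chain of Green identities for $W$ that $-\uu(x)=-\int_{\Gamma_{\theta}}\transposee{[W(x,y)]}T_{y}(\uu_{\theta}+\uu^{i})\,ds(y)$, and adds the two using $\Phi+\transposee{W}=\transposee{V}$; Lemma \ref{intrepdir} is not used there, only later in Theorem \ref{Th1}. Your variant is arguably tidier, since both intermediate identities have the same Somigliana form and the condition $\uu_{\theta}+\uu^{i}=0$ on $\Gamma_{\theta}$ enters only once; the paper's variant has the benefit of producing the representations with density $T(\uu_{\theta}+\uu^{i})$ that are reused in the proof of Lemma \ref{identity2}. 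Your two cautionary remarks hit the genuinely delicate points: the outward normal of the annulus is $-\nn$ on $\Gamma$ and $\nn_{\theta}$ on $\Gamma_{\theta}$, and with that convention your signs come out consistent with \eqref{id}; and the vanishing of the reciprocity integral over spheres of radius $R\to\infty$ for two radiating fields is exactly the standard elastic radiation lemma the paper invokes implicitly each time it applies the second Green formula ``on $\Omega^c$ for radiating solutions''.
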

\begin{proof} From the second Green formula \eqref{IPP2}  for $\Phi(x,\cdot)$ and $\uu^{i}$ we can write
$$\int_{\Gamma}\left(\transposee{\left[T_{y}\Phi(x,y)\right]}\uu^{i}(y)-\Phi(x,y)T_{y}\uu^{i}(y)\right)ds(y)=0,$$
for all $x\in\Omega^c$. Using the boundary condition of $\uu$ we then obtain
\begin{equation}\label{rep1}\uu(x)=-\int_{\Gamma}\Phi(x,y)T_{y}\big(\uu(y)+\uu^{i}(y)\big)ds(y),\quad x\in\Omega^c.\end{equation}
From the second Green formula \eqref{IPP2}  for the radiating solutions $W$ and $\uu_{\theta}$ in $\Omega_{\theta}^c$ we can write
$$\int_{\Gamma_{\theta}}\left(\transposee{\left[T_{y}W(x,y)\right]}\uu_{\theta}(y)-\transposee{[W(x,y)]}T_{y}\uu_{\theta}(y)\right)ds(y)=0,\quad x\in\Omega_{\theta}^c.$$
From the second Green formula \eqref{IPP2} on $\Omega_{\theta}\backslash\overline{\Omega}$ for $W$ and $\uu^{i}$, the last equation  and the boundary condition of $\uu_{\theta}$ we have
\begin{equation}\label{aa}\hspace{-2mm}\begin{array}{l}\displaystyle{\int_{\Gamma_{\theta}}\transposee{[W(x,y)]}T_{y}\big(\uu_{\theta}(y)+\uu^{i}(y)\big)ds(y)}\\\hspace{0.3cm}=\displaystyle{\int_{\Gamma_{\theta}}\left(\transposee{[W(x,y)]}T_{y}\big(\uu_{\theta}(y)+\uu^{i}(y)\big)-\transposee{\left[T_{y}W(x,y)\right]}\big(\uu_{\theta}(y)+\uu^{i}(y)\big)\right)ds(y)}\vspace{1mm}\\\hspace{0.4cm}=\displaystyle{\int_{\Gamma_{\theta}}\left(\transposee{[W(x,y)]}T_{y}\uu^{i}(y)-\transposee{\left[T_{y}W(x,y)\right]}\uu^{i}(y)\right)ds(y)}\vspace{1mm}\\\hspace{0.4cm}=\displaystyle{\int_{\Gamma}\left(\transposee{[W(x,y)]}T_{y}\uu^{i}(y)-\transposee{\left[T_{y}W(x,y)\right]}\uu^{i}(y)\right)ds(y)},\end{array}\end{equation}
for $x\in\Omega^c_{\theta}$. Using the boundary condition of $W$ and $\uu$ on $\Gamma$, the integral representation \eqref{rep1} of $\uu$ and the second Green formula for radiating solutions we obtain
\begin{eqnarray*}-\uu(x)&=&-\int_{\Gamma}W(x,y)T_{y}\big(\uu(y)+\uu^{i}(y)\big)ds(y)\\&=&-\int_{\Gamma}\left(\transposee{[W(x,y)]}T_{y}\uu^{i}(y)+\transposee{[T_{y}W(x,y)]}\uu(y)\right)ds(y)\\&=&-\int_{\Gamma}\left(\transposee{[W(x,y)]}T_{y}\uu^{i}(y)-\transposee{[T_{y}W(x,y)]}\uu^{i}(y)\right)ds(y)\\&=&-\displaystyle{\int_{\Gamma_{\theta}}\transposee{[W(x,y)]}T_{y}\big(\uu_{\theta}(y)+\uu^{i}(y)\big)ds(y)},\end{eqnarray*}
for $x\in\Omega^c$. From $\eqref{rep1}$ we can write for $\uu_{\theta}$
\begin{equation}\label{adjintrep}\uu_{\theta}(x)=-\int_{\Gamma_{\theta}}\Phi(x,y)T_{y}\big(\uu_{\theta}(y)+\uu^{i}(y)\big)ds(y),\quad x\in\Omega_{\theta}^c.\end{equation}
We obtain the identity \eqref{id} by combining the last two equations.
\end{proof}

\begin{remark}\label{remark}As Kress and P\"aiv\"arinta pointed out in \cite{KressPaivarinta} in the acoustic case,  the lemma \ref{identity} remains valid when the domain $\Omega$ is not strictly contained in $\Omega_{\theta}$ and $W$ can be extended as a solution to the Navier equation in the exterior of $\Omega_{\theta}$. By theorem 5.7.1' in \cite{Morrey} pp. 169, this can be assured if $\Gamma$ is analytic and $\Omega_{\theta}$ does not differ too much from $\Omega$. In this case the last equality in \eqref{aa} follows by choosing an open domain $D$ such that $\overline{\Omega} \cap \Omega_{\theta}\subset D$ and then applying Green's integral theorem first in $D\backslash\Omega$ and then in $D\backslash\Omega_{\theta}$.\end{remark}

\begin{lemma}\label{identity2}Assume that $\Gamma$ is analytic. Then the following expansion holds
\begin{equation}\uu_{\theta}-\uu=\int_{\Gamma}\transposee{\left[T_{y}V(\cdot,y)\right]}B\uu(y)ds(y)+o\big(||\theta||_{\mathscr{C}^2}\big),\end{equation}
  in  $\HH(G,\Updelta^*)$ for all compact subset $G$ of $\Omega^c$ and $\theta$ sufficiently small, where
$$B\uu=-(\theta\cdot\nn)\left(\frac{1}{\mu}\big(\nn\times T(\uu+\uu^{i})\big)\times\nn+\frac{1}{\lambda+2\mu}\big(\nn\cdot T(\uu+\uu^{i})\big)\nn\right).$$
\end{lemma}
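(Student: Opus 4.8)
The plan is to start from the Alves--Kress identity of Lemma~\ref{identity}, which under the analyticity assumption and Remark~\ref{remark} is available for all sufficiently small $\theta$, not merely outward perturbations. The decisive structural fact is that the total point-source field $V(x,\cdot)=\Phi(x,\cdot)+W(x,\cdot)$ satisfies the Dirichlet condition $V(x,\cdot)=0$ on $\Gamma$ by construction of $W$, and --- because $\Gamma$ is analytic --- extends as a solution of the Navier equation across $\Gamma$, hence is smooth in a full neighbourhood of $\Gamma$ for each fixed $x$ in a compact set $G\subset\Omega^c$. Since $V(x,\cdot)$ vanishes on $\Gamma$, its restriction to the nearby surface $\Gamma_\theta$ is itself of order $\theta$, and this is exactly what makes the right-hand side of \eqref{id} a first-order quantity. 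First I would pull the boundary integral in \eqref{id} back to $\Gamma$ through the diffeomorphism $y\mapsto y_\theta:=y+\theta(y)$, writing the surface measure as $ds_\theta=J_\theta\,ds$ with Jacobian $J_\theta=1+O(\|\theta\|_{\mathscr{C}^2})$ and recording that the traction $T_{\nn_\theta}(\uu_\theta+\uu^i)$ there is taken with respect to $\nn_\theta$.

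Next I would expand each factor in $\theta$. Because $V(x,\cdot)$ vanishes on $\Gamma$ its tangential derivatives vanish there, so Taylor expansion leaves only the normal direction,
$$V(x,y_\theta)=(\theta(y)\cdot\nn(y))\,\frac{\partial V}{\partial\nn}(x,y)+O(\|\theta\|_{\mathscr{C}^2}^2),$$
uniformly for $x\in G$ and $y\in\Gamma$; this is the sole place where the analytic extension of $V$ is needed. The remaining factor, namely $T_{\nn_\theta}(\uu_\theta+\uu^i)$ transported to $\Gamma$, is $O(1)$ and, by the continuous dependence of the boundary traction on the domain, converges in $\HH^{-\frac{1}{2}}(\Gamma)$ to $T(\uu+\uu^i)$ as $\theta\to0$. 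Since this $O(1)$ factor multiplies the factor $(\theta\cdot\nn)\tfrac{\partial V}{\partial\nn}=O(\|\theta\|_{\mathscr{C}^2})$, replacing $\uu_\theta$ by $\uu$, discarding $J_\theta-1$, and dropping the quadratic remainder each cost only $o(\|\theta\|_{\mathscr{C}^2})$. Collecting the leading term yields
$$\uu_\theta-\uu=-\int_\Gamma(\theta\cdot\nn)\,\transposee{\left[\tfrac{\partial V}{\partial\nn}(\cdot,y)\right]}\,T(\uu+\uu^i)(y)\,ds(y)+o(\|\theta\|_{\mathscr{C}^2}).$$

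It then remains to recognise this leading integral as the asserted one. Both $V(x,\cdot)$ and the total field $\uu+\uu^i$ vanish on $\Gamma$, and for any field $\vv$ with $\vv_{|\Gamma}=0$ the representation \eqref{Tn2} collapses (using $\mathcal{M}\vv=0$ and $\Div\vv=\nn\cdot\tfrac{\partial\vv}{\partial\nn}$) to $T\vv=S\,\tfrac{\partial\vv}{\partial\nn}$, where $S\colon p\mapsto\mu\,(\nn\times p)\times\nn+(\lambda+2\mu)(p\cdot\nn)\,\nn$ is a pointwise self-adjoint map. Self-adjointness of $S$ gives the pointwise symmetry $\tfrac{\partial\vv}{\partial\nn}\cdot T\boldsymbol{w}=T\vv\cdot\tfrac{\partial\boldsymbol{w}}{\partial\nn}$; applied columnwise with $\vv=V_j$ and $\boldsymbol{w}=\uu+\uu^i$ this rewrites the integrand above as $\transposee{\left[T_yV(\cdot,y)\right]}\tfrac{\partial(\uu+\uu^i)}{\partial\nn}$. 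Finally, inverting $S$ identifies $\tfrac{\partial(\uu+\uu^i)}{\partial\nn}=S^{-1}T(\uu+\uu^i)$ with precisely the bracket in the definition of $B\uu$, so that $-(\theta\cdot\nn)\tfrac{\partial(\uu+\uu^i)}{\partial\nn}=B\uu$ and the leading integral becomes $\int_\Gamma\transposee{\left[T_yV(\cdot,y)\right]}B\uu\,ds$, as claimed.

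The convergence is stated in the graph norm of $\HH(G,\Updelta^*)$; since for $x\in G$ the kernels $V(x,\cdot)$, $\tfrac{\partial V}{\partial\nn}(x,\cdot)$ and their $x$-derivatives and $\Updelta^*_x$-images are smooth and depend continuously on the data, the scalar order estimates upgrade to this norm by differentiating under the integral sign. I expect the genuine obstacle to be the one input used above as a black box, namely the continuous dependence in $\HH^{-\frac{1}{2}}(\Gamma)$ of the transported traction $T_{\nn_\theta}(\uu_\theta+\uu^i)$ on $\theta$, together with the uniform control of the Taylor remainder of $V$ near the analytic boundary. Once these two facts are secured, everything else is bookkeeping of powers of $\|\theta\|_{\mathscr{C}^2}$.
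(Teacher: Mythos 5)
Your argument is correct, and it reaches the asserted formula by a genuinely different middle route than the paper. The paper, after securing the same two inputs you isolate (the $\HH^{-\frac{1}{2}}(\Gamma)$-convergence \eqref{aprox0} of the transported traction, and the analytic extension of the fields across $\Gamma$), first replaces $T_y(\uu_\theta+\uu^i)$ by $T_y(\uu+\uu^i)$ inside the integral over $\Gamma_\theta$, then uses the first Green formula \eqref{IPP1} and $V(x,\cdot)|_\Gamma=0$ to convert the boundary integral into a volume integral over the symmetric difference $\Omega_\theta^*$, and finally shrinks that thin shell to a surface integral over $\Gamma$ via $\chi\,dz=(\theta\cdot\nn)\,ds\,dt+o(\|\theta\|_{\mathscr{C}^2})$; the form $\transposee{[T_yV]}\frac{\partial(\uu+\uu^i)}{\partial\nn}$ then falls out of the tensor symmetry $[\nabla V_j]:\upsigma(\uu+\uu^i)=\upsigma(V_j):[\nabla(\uu+\uu^i)]$. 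You instead pull the integral in \eqref{id} back to $\Gamma$ and Taylor-expand the kernel $V(x,\cdot)$ in the normal direction, landing first on $\transposee{\bigl[\frac{\partial V}{\partial\nn}\bigr]}T(\uu+\uu^i)$, and then need the extra observation that for traces vanishing on $\Gamma$ one has $T=S\,\frac{\partial}{\partial\nn}$ with $S$ pointwise self-adjoint --- which is precisely the paper's tensor symmetry specialized to purely normal gradients --- in order to move $T$ across the pairing and invert $S$; your identification $-(\theta\cdot\nn)S^{-1}T(\uu+\uu^i)=B\uu$ agrees with the paper's use of \eqref{Tn2} and \eqref{Tn3}. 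Your route is more elementary (no Green's theorem on $\Omega_\theta^*$, no volume-to-surface limit) and it only needs \eqref{aprox0}, not the separate statement \eqref{aprox1} about the fixed field's traction evaluated on $\Gamma_\theta$; what it still requires, and what you correctly flag, is the two-sided $\mathscr{C}^2$-smoothness of $V(x,\cdot)$ across $\Gamma$ (Morrey's theorem, hence the analyticity hypothesis) so that the Taylor remainder is uniformly $O(\|\theta\|_{\mathscr{C}^2}^2)$ for $x$ in the compact set $G$, together with the continuity input that the paper obtains from the regularized boundary integral equation. Modulo these two black boxes --- the same ones the paper relies on --- your bookkeeping of the orders in $\|\theta\|_{\mathscr{C}^2}$, including the pairing of the $O(\|\theta\|)$ smooth factor against the $o(1)$ difference of tractions in $\HH^{-\frac{1}{2}}(\Gamma)$, is sound.
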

\begin{proof} We use similar arguments as in the proof of theorem 3.1 in \cite{KressPaivarinta} for the analogous acoustic case. We denote by $S_{\theta}$ and $K_{\theta}'$ the  integral operators on the boundary $\Gamma_{\theta}$ with singular kernels $2\Phi(x,y)$ and $2[T_{x}\Phi(x,y)]$ respectively. The fundamental solution $\Phi$ is pseudo-homogeneous of class $-1$. It can be shown that these operators are bounded from $\HH^{-\frac{1}{2}}(\Gamma_{\theta})$ to itself (see \cite{CostabelStephan} and \cite{Nedelec}, pp. 176). From \eqref{adjintrep} and the jump relations, it can be deduced that the traction derivative of the total field $(\uu_{\theta}+\uu^{i})$ solves the boundary integral equation
$$\left(\Id+K'_{\theta}-i\eta S_{\theta}\right)T(\uu_{\theta}+\uu^{i}_{|\Gamma_{\theta}})=2(T\uu^{i}_{|\Gamma_{\theta}}-i\eta\uu^{i}_{|\Gamma_{\theta}}).$$
The operator $K_{\theta}'$ is not compact, therefore we use a regularization technique (see \cite{Kupradze}) to modify the integral equation as below 
$$(Id+\mathcal{B}_{\theta})T(\uu_{\theta}+\uu^{i}_{|\Gamma_{\theta}})=2\mathcal{H}_{\theta}(T\uu^{i}_{|\Gamma_{\theta}}-i\eta\uu^{i}_{|\Gamma_{\theta}})$$
where $(Id+\mathcal{B}_{\theta})=\mathcal{H}_{\theta}\left(\Id+K'_{\theta}-i\eta S_{\theta}\right)$, $\mathcal{H}_{\theta}$ is a strongly integral operator and $\mathcal{B}_{\theta}$ is a weakly singular operator. The new integral operator $\Id+\mathcal{B}_{\theta}:\HH^{-\frac{1}{2}}(\Gamma_{\theta})\rightarrow\HH^{-\frac{1}{2}}(\Gamma_{\theta})$ is a Fredholm operator of the second kind which is invertible with bounded inverse.
We use  the transformation $\tau_{\theta}$  which maps a function $\uu_{\theta}$ defined on $\Gamma_{\theta}$ onto the function $\uu_{\theta}\circ(\Id+\theta)$ defined on $\Gamma$.
 Reducing the analysis in \cite{CostabelLeLouer} to the continuity and not the differentiability, we can prove that the boundary integral operators $\tau_{\theta}\mathcal{B}_{\theta}\tau_{\theta}^{-1}$ and $\tau_{\theta}\mathcal{H}_{\theta}\tau_{\theta}^{-1}$ depend continuously on the deformation $\theta\in\mathscr{C}^{2}(\Gamma)$ and so does the inverse $\tau_{\theta}(\Id+\mathcal{B}_{\theta})^{-1}\tau_{\theta}^{-1}$ from the Neumann series. Since the incident field is analytic on the boundary $\Gamma_{\theta}$, we then deduce that the total field satifies
\begin{equation}\label{aprox0}||\tau_{\theta}\left(T(\uu_{\theta}+\uu^{i})_{|\Gamma_{\theta}}\right)-T(\uu+\uu^{i})||_{\HH^{-\frac{1}{2}}(\Gamma)}\to0,\quad ||\theta||_{\mathscr{C}^2}\to0\end{equation}
Since $\Gamma$ is analytic, by theorem 5.7.1' in \cite{Morrey} pp. 169 the total fields $(\uu+\uu^{i})$ and $W$ can be extended as a solution to the Navier equation across $\Gamma$ in the exterior of $\overline{\Omega}\cap\Omega_{\theta}$. We also have
\begin{equation}\label{aprox1}||\tau_{\theta}\left(T(\uu+\uu^{i})_{|\Gamma_{\theta}}\right)-T(\uu+\uu^{i})||_{\HH^{-\frac{1}{2}}(\Gamma)}\to0,\quad ||\theta||_{\mathscr{C}^2}\to0.\end{equation}
 By lemma \ref{identity} and  using Taylor's formula together  with \eqref{aprox0}-\eqref{aprox1}, the boundary condition for $V$ and the smoothness of $V(x,\cdot)$  up to $\Gamma$ for $x$ away from the boundary, it follows that
$$\int_{\Gamma_{\theta}}\transposee{[V(\cdot,y)]}T_{y}\big(\uu_{\theta}(y)+\uu^{i}(y)\big)ds(y)=\int_{\Gamma_{\theta}}\transposee{[V(\cdot,y)]}T_{y}\big(\uu(y)+\uu^{i}(y)\big)ds(y)+o(||\theta||_{\mathscr{C}^2}),$$
 in  $\HH(G,\Updelta^*)$ for all compact subset $G$ of $\Omega^c$.
 From this, the first Green formula \eqref{IPP1} and the boundary condition for $W$ we obtain
\begin{equation*}\uu_{\theta}-\uu=-\int_{\Omega_{\theta}^{*}}\left\{\Big(\big[\nabla V_{j}\big]:\upsigma\big(\uu+\uu^{i}\big)\Big)_{j=1,2,3}-\rho\omega^2\transposee{[V]}(\uu+\uu^{i})\right\}\chi dy+o(||\theta||_{\mathscr{C}^2})\end{equation*}
where $$\Omega_{\theta}^*=\{y\in\Omega_{\theta};y\not\in\Omega\}\cup\{y\in\Omega;y\not\in\Omega_{\theta}\},$$
and $\chi(y)=1$ if $y\in\Omega_{\theta}$ and $y\not\in\Omega$ and $\chi(y)=-1$ if $y\in\Omega$ and $y\not\in\Omega_{\theta}$. 
Any $z\in\Omega_{\theta}^*$ can be represented of the form $z=y+t\theta(y)$, with $y\in\Gamma$ and $t>0$. We have :
\begin{itemize}
\item[(a)] $\chi(z)dz=\big(\theta(y)\cdot\nn(y)\big)ds(y)dt+o(||\theta||_{\mathscr{C}^2})$,
\item[(b)] $(\uu+\uu^{i})(y+t\theta(y))=o(||\theta||_{\mathscr{C}^2})$ and $V(\cdot,y+t\theta(y))=o(||\theta||_{\mathscr{C}^2})$,
\item[(c)] $\nabla\big(\uu+\uu^{i}\big)=\dfrac{\partial}{\partial\nn}(\uu+\uu^{i})\cdot\transposee{\nn}+o(||\theta||_{\mathscr{C}^2})$ and,
\item[(d)] for $j=1,2,3$, $$\big[\nabla V_{j}\big]:\upsigma\big(\uu+\uu^{i}\big)=\upsigma\big(V_{j}\big):\big[\nabla(\uu+\uu^{i})\big]=T_{y}V_{j}\cdot\dfrac{\partial}{\partial\nn}(\uu+\uu^{i})+o(||\theta||_{\mathscr{C}^2}).$$ \end{itemize}
The expansion $(a)$ for the volume form is a well-known result and we refer to \cite{KressPaivarinta,HaddarKress, PierreHenrot} for a proof.
Approximating the integral over $\Omega_{\theta}^*$ by an integral over $\Gamma$ we obtain that
\begin{equation*}\uu_{\theta}-\uu=-\int_{\Gamma}\transposee{\left[T_{y}V(\cdot,y)\right]}\frac{\partial(\uu+\uu^{i})}{\partial\nn}(y)\big(\theta(y)\cdot\nn(y)\big)ds(y)+o(||\theta||_{\mathscr{C}^2}).\end{equation*}
To conclude, we express the normal derivative of $(\uu+\uu^{i})$ in function of its traction derivative. Since $\uu+\uu^{i}=0$ on $\Gamma$ then the tangential Gunter's derivative $\mathcal{M}(\uu+\uu^{i})_{|\Gamma}$ vanishes. 
Using \eqref{Tn2}, we obtain
$$\left(\nn\times T(\uu+\uu^{i})\right)\times\nn=\mu\left(\nn\times\frac{\partial}{\partial\nn}(\uu+\uu^{i})\right)\times\nn,$$
and using \eqref{Tn3} we obtain
$$\nn\cdot T(\uu+\uu^{i})=(\lambda+2\mu)\left(\nn\cdot\frac{\partial(\uu+\uu^{i})}{\partial\nn}\right).$$
\end{proof}

\begin{theorem}\label{Th1} Let $\Gamma$ be analytic. Then the mapping $\mathcal{F}:{\mathscr{C}^2(\Gamma,\R^3)\hspace{-.5mm}\rightarrow\hspace{-.5mm}\LL^2_{s}(S^2)\hspace{-.5mm}\times\hspace{-.5mm}\LL^2_{p}(S^2)}$ is Fr\'echet differentiable at $\theta=0$ with the  Fr\'echet derivative defined for $\xi\in\mathscr{C}^2(\Gamma,\R^3)$ by $$\mathcal{F}'(0)\xi=\vv^{\infty}_{\xi},$$
where $\vv^{\infty}_{\xi}$ is the far-field pattern of the solution $\vv_{\xi}$ to the Navier equation in $\Omega^c$ that satisfies the Kupradze radiation condition and the Dirichlet boundary condition
$$\vv_{\xi}=-(\xi\cdot\nn)\left(\frac{1}{\mu}\big(\nn\times T(\uu+\uu^{i})\big)\times\nn+\frac{1}{\lambda+2\mu}\big(\nn\cdot T(\uu+\uu^{i})\big)\nn\right).$$
\end{theorem}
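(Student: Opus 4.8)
The plan is to read off the Fr\'echet derivative directly from the expansion of Lemma~\ref{identity2}, which already carries all the analytic weight: there the difference $\uu_{\theta}-\uu$ has been shown to equal a boundary integral against $\transposee{[T_{y}V(\cdot,y)]}$ plus a remainder of order $o(\|\theta\|_{\mathscr{C}^2})$ in $\HH(G,\Updelta^*)$ for every compact $G\subset\Omega^c$. It then remains only to (i) identify the leading term as the near field of a radiating Dirichlet solution, (ii) check that this term depends linearly and continuously on the deformation, and (iii) transport the expansion from the compact set $G$ to the far-field pattern.

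For steps (i)--(ii), observe that the operator $B\uu$ of Lemma~\ref{identity2} depends on the deformation only through the scalar factor $(\theta\cdot\nn)$, the remaining factor being the fixed boundary field $\frac{1}{\mu}(\nn\times T(\uu+\uu^i))\times\nn+\frac{1}{\lambda+2\mu}(\nn\cdot T(\uu+\uu^i))\nn$ determined by the reference solution. Writing $B_{\xi}\uu$ for the same expression with $\xi$ in place of $\theta$, the map $\xi\mapsto B_{\xi}\uu$ is manifestly linear; and since $\Gamma$ is analytic the total field extends across $\Gamma$ (Morrey's theorem, as already invoked in the proof of Lemma~\ref{identity2}), so $T(\uu+\uu^i)$ is smooth and $\xi\mapsto B_{\xi}\uu$ is bounded from $\mathscr{C}^2(\Gamma,\R^3)$ into $\HH^{\frac{1}{2}}(\Gamma)$. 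By Lemma~\ref{intrepdir}, the integral $\int_{\Gamma}\transposee{[T_{y}V(\cdot,y)]}B_{\xi}\uu(y)\,ds(y)$ is precisely the unique radiating solution $\vv_{\xi}$ of the Navier equation whose Dirichlet trace equals $B_{\xi}\uu$, that is, the field described in the statement. Composing the bounded solution operator of the exterior Dirichlet problem with $\xi\mapsto B_{\xi}\uu$ shows that $\xi\mapsto\vv_{\xi}$ is linear and continuous, as required of $\mathcal{F}'(0)$.

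For step (iii) I would use that the far-field pattern of a radiating solution is a bounded linear functional of the solution on any compact set enclosing $\overline{\Omega}$. Concretely, fix a sphere $\partial B$ with $\overline{\Omega}\subset B$ and $\overline{B}$ contained in the region where the expansion of Lemma~\ref{identity2} is valid; the Somigliana integral representation written on $\partial B$ expresses the far-field pattern through the asymptotics of $\Phi$ and $T_{y}\Phi$ applied to the Cauchy data $(\uu,T\uu)$ on $\partial B$, and these Cauchy data are controlled by the $\HH(G,\Updelta^*)$ norm on a neighborhood $G$ of $\partial B$. This yields a bounded linear map from $\HH(G,\Updelta^*)$ into $\LL^2_s(S^2)\times\LL^2_p(S^2)$, the splitting into transversal and longitudinal parts being itself bounded. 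Applying this map to the expansion of Lemma~\ref{identity2} turns the $\HH(G,\Updelta^*)$ remainder into an $o(\|\theta\|_{\mathscr{C}^2})$ remainder in the far-field norm, giving $\mathcal{F}(\theta)-\mathcal{F}(0)=\vv_{\theta}^{\infty}+o(\|\theta\|_{\mathscr{C}^2})$, and hence $\mathcal{F}'(0)\xi=\vv_{\xi}^{\infty}$.

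The one genuine obstacle is the justification of step (iii), namely the interchange of the limit $|x|\to\infty$ with the boundary differentiation announced informally before the statement. The representation-formula argument reduces this to the standard fact that the map sending the Cauchy data of a radiating solution on $\partial B$ to its far-field pattern is bounded; the point requiring care is only that this boundedness recover both components $\uu_{p}^{\infty}$ and $\uu_{s}^{\infty}$ continuously, which follows from the explicit asymptotics of the Kupradze fundamental solution. Everything else is a direct transcription of the expansion already proven in Lemma~\ref{identity2}.
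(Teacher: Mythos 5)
Your proposal is correct and follows the paper's route exactly: the paper also proves Theorem~\ref{Th1} by combining Lemma~\ref{intrepdir} (to identify the leading term of the expansion as the radiating Dirichlet solution with boundary data $B_{\xi}\uu$) with Lemma~\ref{identity2}, the passage to the far field being the standard continuity of the Cauchy-data-to-far-field map that the paper invokes implicitly. You have merely written out the linearity/continuity and near-field-to-far-field details that the paper compresses into ``direct consequence of Lemmas \ref{intrepdir} and \ref{identity2}.''
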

\begin{proof} This is a direct consequence of Lemmas \ref{intrepdir} and \ref{identity2}.
\end{proof}

Although theorem \ref{Th1} was proven under the assumption that $\Gamma$ is analytic, we expect that, proceeding as in \cite{KressPaivarinta}, the result is also valid for $\mathscr{C}^2$ boundaries. 

\section{The exterior   impedance boundary value problem}
Now we consider the scattering problem by a cavity are an absorbing obstacle $\Omega$. The total displacement field $\uu+\uu^{i}$ is given by the superposition of the incident field $\uu^{i}$, which we assume to be an entire solution of the Navier equation, and the scattered field $\uu$, which solves the Navier equation in $\Omega^c$, the impedance boundary  condition  $$T(\uu+\uu^{i})+i\alpha\omega\sqrt{\rho}\,(\uu+\uu^{i})=0\quad\text{on }\Gamma,$$ and satisfies the Kupradze radiation condition.

For $x\in\Omega^c$ let $W$ be the $3\times3$ matrix whose the $j$-th column $W_{j}$ is the radiating solution of \eqref{NE} and \eqref{dirichlet} for the boundary value
$$g=-\left(T\Phi_{j}(x,\cdot)+i\alpha\omega\sqrt{\rho}\,\Phi_{j}(x,\cdot)\right)\quad\text{on }\Gamma,$$
and set $V(x,\cdot)=\Phi(x,\cdot)+W(x,\cdot)$.

\begin{lemma}\label{intrepimp} The unique radiating solution $\vv\in\HH_{loc}(\Omega^c,\Updelta^*)$  of the Navier equation \eqref{NE} satisfying the boundary condition \eqref{impedance} for any $g\in\HH^{-\frac{1}{2}}(\Gamma)$ admits the following integral representation
\begin{equation}\vv(x)=-\int_{\Gamma}\transposee{\left[V(x,y)\right]}g(y)ds(y),\quad x\in\Omega^c.
\end{equation}
\end{lemma}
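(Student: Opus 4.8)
The plan is to mirror the proof of Lemma \ref{intrepdir} almost line for line, the only structural change being that the role played there by the homogeneous Dirichlet condition $V(x,\cdot)_{|\Gamma}=0$ is now played by a homogeneous impedance condition. The first thing I would record is that, by the very definition of $W$, the field $V(x,\cdot)=\Phi(x,\cdot)+W(x,\cdot)$ satisfies $T_yV(x,y)+i\alpha\omega\sqrt{\rho}\,V(x,y)=0$ for $y\in\Gamma$, since the datum $g=-\big(T\Phi_j(x,\cdot)+i\alpha\omega\sqrt{\rho}\,\Phi_j(x,\cdot)\big)$ was chosen precisely so that the impedance trace of each column $W_j$ cancels that of $\Phi_j$. (This forces the reading that each $W_j$ solves the \emph{impedance} problem \eqref{impedance} with datum $g$, rather than the Dirichlet problem referenced in the preceding paragraph.) This homogeneous relation is the exact analogue of $V(x,\cdot)_{|\Gamma}=0$ from the Dirichlet case, and it is what will allow the traction of $V$ on $\Gamma$ to be traded for a multiple of $V$ itself.

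The steps then proceed in the following order. First I apply the second Green formula \eqref{IPP2} on $\Omega^c$ to the (column-wise) radiating solutions $W(x,\cdot)$ and $\vv$; both solve the Navier equation and satisfy the Kupradze radiation condition, so the contribution at infinity vanishes and the $\Gamma$-integral of $\transposee{[T_yW(x,y)]}\vv(y)-\transposee{[W(x,y)]}T_y\vv(y)$ is zero. Second, I write the Somigliana representation \eqref{intrep} for $\vv$ and use the symmetry $\Phi(x,y)=\transposee{\Phi(x,y)}$ to bring it into the matching form with integrand $\transposee{[T_y\Phi(x,y)]}\vv(y)-\transposee{[\Phi(x,y)]}T_y\vv(y)$. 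Adding these two identities and using $V=\Phi+W$ collapses everything into
\[
\vv(x)=\int_{\Gamma}\left(\transposee{[T_yV(x,y)]}\vv(y)-\transposee{[V(x,y)]}T_y\vv(y)\right)ds(y).
\]
Finally I substitute $\transposee{[T_yV(x,y)]}=-i\alpha\omega\sqrt{\rho}\,\transposee{[V(x,y)]}$ from the homogeneous impedance condition, factor out $\transposee{[V(x,y)]}$, and recognise $T_y\vv(y)+i\alpha\omega\sqrt{\rho}\,\vv(y)=g(y)$ from the impedance boundary condition \eqref{impedance} satisfied by $\vv$; this yields exactly $\vv(x)=-\int_{\Gamma}\transposee{[V(x,y)]}g(y)\,ds(y)$.

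Since the argument is essentially algebraic, I do not anticipate a serious analytic obstacle: the vanishing of the boundary term at infinity is guaranteed by the radiation condition, and the existence and uniqueness of $W$ and $\vv$ for the impedance problem are furnished by Kupradze \cite{Kupradze}. The two points that genuinely require care are, first, verifying the homogeneous impedance condition for $V$ (equivalently, confirming the intended reading of the definition of $W$ noted above), and second, keeping the transpose conventions and the column-wise action of $T_y$ consistent across the second Green formula, the Somigliana formula, and the symmetry of $\Phi$, so that the two identities add cleanly into the $V$-form displayed above. These are exactly the bookkeeping steps already validated in the Dirichlet case, so I expect the transcription to go through without new difficulties.
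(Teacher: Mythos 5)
Your proposal is correct and follows essentially the same route as the paper: the second Green formula for $W$ and $\vv$ on $\Omega^c$, the Somigliana representation of $\vv$, the homogeneous impedance condition satisfied by $V(x,\cdot)$ (you rightly read the definition of $W$ in Section 4 as solving the impedance problem, despite the paper's stray reference to the Dirichlet condition), and finally the boundary condition $T\vv+i\alpha\omega\sqrt{\rho}\,\vv=g$. The only cosmetic difference is the order of the algebra — the paper carries the operator $T_y+i\alpha\omega\sqrt{\rho}$ through each identity from the start, while you add the two identities first and then invoke the homogeneous impedance condition — but the computations are identical.
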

\begin{proof}From the second Green formula \eqref{IPP2} on $\Omega^c$ for the radiating solutions $W$ and $\vv$ we can write
$$\int_{\Gamma}\left(\transposee{\left[(T_{y}+i\alpha\omega\sqrt{\rho})W(x,y)\right]}\vv(y)-\transposee{[W(x,y)]}(T_{y}+i\alpha\omega\sqrt{\rho})\vv(y)\right)ds(y)=0,$$
for all $x\in\Omega^c$.
Using the boundary condition for $W$ and $\vv$ and  the Somigliana integral representation formula for $\vv$ we obtain
\begin{eqnarray*}\label{intrep}\vv(x)&=&\int_{\Gamma}\left(\transposee{\big[(T_{y}+i\alpha\omega\sqrt{\rho})\Phi(x,y)\big]}\vv(y)-\transposee{[\Phi(x,y)]}(T_{y}+i\alpha\omega\sqrt{\rho})\vv(y)\right)ds(y)\\&=&\int_{\Gamma}\left(-\transposee{\big[(T_{y}+i\alpha\omega\sqrt{\rho})W(x,y)\big]}\vv(y)-\transposee{\left[\Phi(x,y)\right]}(T_{y}+i\alpha\omega\sqrt{\rho})\vv(y)\right)ds(y)\\&=&-\int_{\Gamma}\transposee{\left[V(x,y)\right]}(T_{y}+i\alpha\omega\sqrt{\rho})\vv(y)ds(y)=-\int_{\Gamma}\transposee{\left[V(x,y)\right]}g(y)ds(y).\end{eqnarray*}
\end{proof}
Here again we use a parametrization of the boundaries in order to investigate the Fr\'echet differentiability of  the boundary to far field operator $$F:\Gamma\mapsto \uu^{\infty}=(\uu^{\infty}_{s},\uu^{\infty}_{p})\in\LL_{s}^2(S^2)\times\LL^2_{p}(S^2)$$ which maps the boundary of the Neumann or impedance obstacle $\Omega$ onto the far-field patterns $\uu^{\infty}_{s}$ and $\uu^{\infty}_{p}$ of the scattered field $\uu$. Thus we will consider instead the mapping $\mathcal{F}:\theta\mapsto F(\Gamma_{\theta})$. The following lemma give a factorization of the difference of the scattered field for two neighboring impedance obstacles. 
\begin{lemma}\label{identity3}Assume that $\overline{\Omega}\subset\Omega_{\theta}$. Then
\begin{equation}\label{id3}\uu_{\theta}(x)-\uu(x)=\int_{\Gamma_{\theta}}\transposee{[T_{y}V(x,y)+i\alpha\omega\sqrt{\rho}\, V(x,y)]}\big(\uu_{\theta}(y)+\uu^{i}(y)\big)ds(y),\end{equation}
for all $x\in\Omega_{\theta}^c$.
\end{lemma}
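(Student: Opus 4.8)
The plan is to mirror the proof of the Dirichlet identity in Lemma~\ref{identity}, replacing the traction operator $T_y$ by the impedance operator $T_y+i\alpha\omega\sqrt{\rho}$ throughout. The guiding observation is that, exactly as in the proof of Lemma~\ref{intrepimp}, whenever the second Green formula \eqref{IPP2} is applied to two radiating solutions the zeroth order impedance terms cancel, so that $T_y$ may be freely replaced by $T_y+i\alpha\omega\sqrt{\rho}$ in the corresponding boundary pairings. By construction $V(x,\cdot)$ satisfies the homogeneous impedance condition $(T_y+i\alpha\omega\sqrt{\rho})V(x,\cdot)=0$ on $\Gamma$, equivalently $(T_y+i\alpha\omega\sqrt{\rho})W(x,\cdot)=-(T_y+i\alpha\omega\sqrt{\rho})\Phi(x,\cdot)$ on $\Gamma$; this plays here the role that $V(x,\cdot)=0$ on $\Gamma$ played in the Dirichlet case.

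First I would establish the impedance analogue of the representation \eqref{rep1}. Starting from the Somigliana formula for $\uu$, adding the vanishing Green identity \eqref{IPP2} for $\Phi(x,\cdot)$ and the entire field $\uu^{i}$, and inserting the boundary condition $T(\uu+\uu^{i})=-i\alpha\omega\sqrt{\rho}(\uu+\uu^{i})$ on $\Gamma$ together with the symmetry $\Phi=\transposee{\Phi}$, I obtain
\[
\uu(x)=\int_{\Gamma}\transposee{[(T_y+i\alpha\omega\sqrt{\rho})\Phi(x,y)]}\big(\uu(y)+\uu^{i}(y)\big)\,ds(y),\quad x\in\Omega^c,
\]
and likewise for $\uu_{\theta}$ with $\Gamma$ replaced by $\Gamma_{\theta}$, $\uu$ by $\uu_{\theta}$ and $\Omega$ by $\Omega_{\theta}$.

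The main step is then to show that the $W$-contribution reproduces $-\uu$, namely
\[
\int_{\Gamma_{\theta}}\transposee{[(T_y+i\alpha\omega\sqrt{\rho})W(x,y)]}\big(\uu_{\theta}(y)+\uu^{i}(y)\big)\,ds(y)=-\uu(x),\quad x\in\Omega_{\theta}^c.
\]
To prove this I would apply the second Green formula \eqref{IPP2} for the radiating solutions $W$ and $\uu_{\theta}$ in $\Omega_{\theta}^c$ and use the impedance condition on $\Gamma_{\theta}$ to replace $(T_y+i\alpha\omega\sqrt{\rho})\uu_{\theta}$ by $-(T_y+i\alpha\omega\sqrt{\rho})\uu^{i}$; this turns the left-hand side into $\int_{\Gamma_{\theta}}\big(\transposee{[(T_y+i\alpha\omega\sqrt{\rho})W]}\uu^{i}-\transposee{W}(T_y+i\alpha\omega\sqrt{\rho})\uu^{i}\big)ds$. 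Since $\overline{\Omega}\subset\Omega_{\theta}$, the matrix $W(x,\cdot)$ is a regular solution of \eqref{NE} on the shell $\Omega_{\theta}\setminus\overline{\Omega}$, so a further application of \eqref{IPP2} for $W$ and $\uu^{i}$ there (the impedance terms again cancelling) lets me transport this integral from $\Gamma_{\theta}$ back to $\Gamma$. On $\Gamma$ I then use the second Green formula for $W$ and $\uu$, the impedance condition on $\Gamma$, and the homogeneous impedance condition of $V$ to recognise the resulting boundary integral as $-\uu(x)$ computed from the representation of the previous paragraph.

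Finally, adding the representation of $\uu_{\theta}$ and the identity just proven, and using $V=\Phi+W$ with the linearity of $T_y+i\alpha\omega\sqrt{\rho}$, yields $\uu_{\theta}(x)-\uu(x)=\int_{\Gamma_{\theta}}\transposee{[(T_y+i\alpha\omega\sqrt{\rho})V(x,y)]}\big(\uu_{\theta}+\uu^{i}\big)ds(y)$, which is \eqref{id3}. I expect the main obstacle to be the careful bookkeeping of the transposes and of the orientation of $\Gamma$ in the shell identity on $\Omega_{\theta}\setminus\overline{\Omega}$, rather than any analytic difficulty: the whole argument rests on the algebraic cancellation of the impedance terms in \eqref{IPP2} and on the regularity of $W$ across $\Gamma$, which is precisely where the hypothesis $\overline{\Omega}\subset\Omega_{\theta}$ enters, as in Remark~\ref{remark} for the Dirichlet case.
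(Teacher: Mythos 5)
Your proposal follows the paper's own proof essentially step for step: the same impedance analogue of the Somigliana-based representation \eqref{rep3}, the same Green-formula transport of the $W$-pairing from $\Gamma_{\theta}$ to $\Gamma$ through the shell $\Omega_{\theta}\setminus\overline{\Omega}$ to identify it with $-\uu(x)$, and the same final addition using $V=\Phi+W$. The observation that the zeroth-order impedance terms cancel in \eqref{IPP2}, so that $T_y$ can be replaced by $T_y+i\alpha\omega\sqrt{\rho}$ throughout, is exactly the mechanism the paper uses implicitly, and your signs are consistent (the minus sign in the paper's \eqref{adjintrep2} appears to be a typo).
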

\begin{proof} From the second Green formula \eqref{IPP2}  for $\Phi(x,\cdot)$ and $\uu^{i}$ we can write
$$\int_{\Gamma}\left(\transposee{\left[(T_{y}+i\alpha\omega\sqrt{\rho})\Phi(x,y)\right]}\uu^{i}(y)-\Phi(x,y)(T_{y}+i\alpha\omega\sqrt{\rho})\uu^{i}(y)\right)ds(y)=0,$$
for all $x\in\Omega^c$. Using the boundary condition of $\uu$ we then obtain
\begin{equation}\label{rep3}\uu(x)=\int_{\Gamma}\transposee{\left[(T_{y}+i\alpha\omega\sqrt{\rho})\Phi(x,y)\right]}\big(\uu(y)+\uu^{i}(y)\big)ds(y),\quad x\in\Omega^c.\end{equation}
From the second Green formula \eqref{IPP2}  for the radiating solutions $W$ and $\uu_{\theta}$  in $\Omega_{\theta}^c$ we can write for $x\in\Omega_{\theta}^c$
$$\int_{\Gamma_{\theta}}\left\{\transposee{\left[(T_{y}+i\alpha\omega\sqrt{\rho})W(x,y)\right]}\uu_{\theta}(y)-\transposee{[W(x,y)]}(T_{y}+i\alpha\omega\sqrt{\rho})\uu_{\theta}(y)\right\}ds(y)=0.$$
From the second Green formula \eqref{IPP2}  for $W$ and $\uu^{i}$ in $\Omega_{\theta}\backslash\overline{\Omega}$, the last equation  and the boundary condition of $\uu_{\theta}$ we have
\begin{equation}\label{aa}\hspace{-.1cm}\begin{array}{l}\displaystyle{\int_{\Gamma_{\theta}}\transposee{[(T_{y}+i\alpha\omega\sqrt{\rho})W(x,y)]}\big(\uu_{\theta}(y)+\uu^{i}(y)\big)ds(y)}\\\hspace{.5cm}=\displaystyle{\int_{\Gamma_{\theta}}\left(\transposee{[(T_{y}+i\alpha\omega\sqrt{\rho})W(x,y)]}\big(\uu_{\theta}(y)+\uu^{i}(y)\big)\right.}\\\left.\hspace{2cm}-\transposee{\left[W(x,y)\right]}(T_{y}+i\alpha\omega\sqrt{\rho})\big(\uu_{\theta}(y)+\uu^{i}(y)\big)\right)ds(y)\vspace{1mm}\\\hspace{.5cm}=\displaystyle{\int_{\Gamma_{\theta}}\left(\transposee{[(T_{y}+i\alpha\omega\sqrt{\rho})W(x,y)]}\uu^{i}(y)-\transposee{\left[W(x,y)\right]}(T_{y}+i\alpha\omega\sqrt{\rho})\uu^{i}(y)\right)ds(y)}\vspace{1mm}\\\hspace{.5cm}=\displaystyle{\int_{\Gamma}\left(\transposee{[(T_{y}+i\alpha\omega\sqrt{\rho})W(x,y)]}\uu^{i}(y)-\transposee{\left[W(x,y)\right]}(T_{y}+i\alpha\omega\sqrt{\rho})\uu^{i}(y)\right)ds(y)},\end{array}\end{equation}
for $x\in\Omega^c_{\theta}$. Using the boundary condition of $W$ and $\uu$ on $\Gamma$, the integral representation \eqref{rep3} of $\uu$ and the second Green formula for radiating solutions we obtain
\begin{eqnarray*}-\uu(x)&=&\int_{\Gamma}\transposee{[(T_{y}+i\alpha\omega\sqrt{\rho})W(x,y)]}\big(\uu(y)+\uu^{i}(y)\big)ds(y)\\&=&\int_{\Gamma}\left(\transposee{[(T_{y}+i\alpha\omega\sqrt{\rho})W(x,y)]}\uu^{i}(y)+\transposee{[W(x,y)]}(T_{y}+i\alpha\omega\sqrt{\rho})\uu(y)\right)ds(y)\\&=&\int_{\Gamma}\left(\transposee{[(T_{y}+i\alpha\omega\sqrt{\rho})W(x,y)]}\uu^{i}(y)-\transposee{[W(x,y)]}(T_{y}+i\alpha\omega\sqrt{\rho})\uu^{i}(y)\right)ds(y)\\&=&\displaystyle{\int_{\Gamma_{\theta}}\transposee{[(T_{y}+i\alpha\omega\sqrt{\rho})W(x,y)]}\big(\uu_{\theta}(y)+\uu^{i}(y)\big)ds(y)},\end{eqnarray*}
for $x\in\Omega^c$. From $\eqref{rep3}$ we can write for $\uu_{\theta}$
\begin{equation}\label{adjintrep2}\uu_{\theta}(x)=-\int_{\Gamma_{\theta}}\transposee{\left[(T_{y}+i\alpha\omega\sqrt{\rho})\Phi(x,y)\right]}\big(\uu_{\theta}(y)+\uu^{i}(y)\big)ds(y),\quad x\in\Omega_{\theta}^c.\end{equation}
We obtain the identity \eqref{id3} by combining the last two equations.
\end{proof}
The remark \ref{remark} is still availaible here, so that we expect that the identity \eqref{id3} remains valid when the domain $\Omega$ is not strictly contained in $\Omega_{\theta}$. 

We denote by $H_{\Gamma}$ the mean curvature of $\Gamma$  defined by $$H_{\Gamma}=\Div_{\Gamma}\nn.$$

\begin{lemma}\label{identity4}Assume that $\Gamma$ is analytic. Then the following expansion holds
\begin{equation}\uu_{\theta}(x)-\uu(x)=-\int_{\Gamma}{\left[V(x,y)\right]}B\uu(y)ds(y)+o\big(||\theta||_{\mathscr{C}^2(\Gamma,\R^3)}\big),\end{equation}
  in  $\HH(G,\Updelta^*)$ for all compact subset $G$ of $\Omega^c$ and $\theta$ sufficiently small, where
  \begin{equation}\label{carac2}\hspace{-3mm}\begin{array}{l}B\uu=\Div_{\Gamma}\Big(\hspace{-.5mm}(\theta\cdot\nn)\big\{\upsigma_{_{\rm I}}(\uu+\uu^{i})-i\alpha\omega\sqrt{\rho}\,\nn\cdot\transposee{(\uu_{t}+\uu^{i}_{t})}\big\}\hspace{-.5mm}\Big)\vspace{2mm}\\\hspace{1cm}+\,\rho\omega^2(\theta\cdot\nn)\left(\hspace{-1mm}\big(1-\dfrac{\alpha^2}{\mu}\big)(\uu_{t}+\uu^{i}_{t})+\big(1-\dfrac{\alpha^2}{\lambda+2\mu}\big)(\uu_{n}+\uu^{i}_{n})\nn\hspace{-.5mm}\right)\\\hspace{4mm}+\,i\alpha\omega\sqrt{\rho}(\theta\cdot\nn)\left(\hspace{-.5mm}[\nabla_{\Gamma}(\uu+\uu^{i})]\nn+\dfrac{\lambda}{\lambda+2\mu}(\Div_{\Gamma}(\uu+\uu^{i}))\nn-(\uu+\uu^{i})H_{\Gamma}\hspace{-.5mm}\right),\end{array}\end{equation}
with
\begin{equation}\label{sigmatau}\upsigma_{_{\rm I}}(\uu)=\lambda\left(\frac{2\mu}{\lambda+2\mu}\Div_{\Gamma}\uu-i\frac{\alpha\omega\sqrt{\rho}}{\lambda+2\mu}\uu_{n}\right)\Pi_{3}+\mu\, \Pi_{3}\left([\nabla_{\Gamma}\uu]+\transposee{[\nabla_{\Gamma}\uu]}\right)\Pi_{3},\end{equation}
and 
$$\Pi_{3}=\Id_{3}-\nn\cdot\transposee{\nn},\quad \uu_{t}=(\nn\times\uu)\times\nn\quad\text{ and }\quad\uu_{n}=\uu\cdot\nn.$$
\end{lemma}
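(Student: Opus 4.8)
The plan is to mirror the proof of Lemma \ref{identity2}, replacing the Dirichlet data of the total field by the datum adapted to the impedance condition and starting from the factorization \eqref{id3} of Lemma \ref{identity3} instead of \eqref{id}. First I would establish the continuous dependence on $\theta$ of the surviving boundary datum. For the impedance problem this datum is the Dirichlet trace $(\uu_{\theta}+\uu^{i})_{|\Gamma_{\theta}}$, since the impedance condition then fixes the traction in terms of it. Taking the Dirichlet trace of the representation \eqref{adjintrep2} and using the jump relations yields a boundary integral equation of the second kind for $(\uu_{\theta}+\uu^{i})_{|\Gamma_{\theta}}$ whose double-layer part is not compact; as in the proof of Lemma \ref{identity2} I would regularize it into a Fredholm equation of the second kind and invoke the continuity (not the differentiability) part of the analysis in \cite{CostabelLeLouer}, together with the pull-back $\tau_{\theta}$, to obtain the analogues of \eqref{aprox0}--\eqref{aprox1}, namely $\|\tau_{\theta}((\uu_{\theta}+\uu^{i})_{|\Gamma_{\theta}})-(\uu+\uu^{i})_{|\Gamma}\|_{\HH^{1/2}(\Gamma)}\to0$ as $\|\theta\|_{\mathscr{C}^{2}}\to0$, the extension across $\Gamma$ being guaranteed by the analyticity of $\Gamma$ and Morrey's theorem (Remark \ref{remark}). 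Combined with Taylor's formula, the homogeneous impedance condition satisfied by $V$, namely $(T_{y}+i\alpha\omega\sqrt{\rho})V(x,y)=0$ for $y\in\Gamma$, and the smoothness of $V(x,\cdot)$ up to $\Gamma$ for $x$ away from the boundary, this lets me replace $\uu_{\theta}+\uu^{i}$ by $\uu+\uu^{i}$ in the integrand of \eqref{id3} modulo $o(\|\theta\|_{\mathscr{C}^{2}})$.

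The genuinely new step is the passage from the integral over $\Gamma_{\theta}$ to one over $\Gamma$. I would apply the first Green formula \eqref{IPP1} to each column $V_{j}$ and to $\uu+\uu^{i}$ on the shell $\Omega_{\theta}^{*}$, both of which solve the Navier equation there so that $\Updelta^{*}=-\rho\omega^{2}$. In contrast with the Dirichlet case, where $V$ vanishes on $\Gamma$ and the inner boundary contribution drops out, here $V$ only satisfies the impedance condition on $\Gamma$; using $TV_{j}=-i\alpha\omega\sqrt{\rho}\,V_{j}$ on $\Gamma$ to rewrite the $\Gamma$-contribution, the identity splits into a volume integral over $\Omega_{\theta}^{*}$ of $\upsigma(V_{j}):\upvarepsilon(\uu+\uu^{i})-\rho\omega^{2}\,V_{j}\cdot(\uu+\uu^{i})$ and a surface-difference term $i\alpha\omega\sqrt{\rho}\big(\int_{\Gamma_{\theta}}-\int_{\Gamma}\big)V_{j}\cdot(\uu+\uu^{i})\,ds$, both of order $\|\theta\|_{\mathscr{C}^{2}}$.

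Next I would expand both contributions to first order as integrals over $\Gamma$ weighted by $(\theta\cdot\nn)$. For the volume term I reuse the volume-form expansion (a) and the gradient-flattening expansions of the type (c)--(d) from the proof of Lemma \ref{identity2}; the essential difference is that neither $V$ nor $\uu+\uu^{i}$ vanishes on $\Gamma$, so the product $\rho\omega^{2}\,V_{j}\cdot(\uu+\uu^{i})$ now survives at order $\|\theta\|_{\mathscr{C}^{2}}$ and produces the $\rho\omega^{2}(\theta\cdot\nn)(\cdots)$ block of \eqref{carac2}, while the stress contraction must be kept with its full gradient rather than only its normal part. For the surface-difference term I would use the Hadamard shape-derivative formula for a surface integral under the deformation $\Gamma_{\theta}=(\Id+\theta)\Gamma$ (see \cite{KressPaivarinta,HaddarKress}), which brings in the mean curvature $H_{\Gamma}=\Div_{\Gamma}\nn$ and the tangential gradient, and yields the $i\alpha\omega\sqrt{\rho}(\theta\cdot\nn)(\cdots)$ block of \eqref{carac2}.

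The last step, which I expect to be the main obstacle, is purely algebraic: to recast the resulting surface integral in the form $-\int_{\Gamma}[V(\cdot,y)]\,B\uu(y)\,ds(y)$ with $B\uu$ given by \eqref{carac2}. I would split every gradient via $\nabla=\nabla_{\Gamma}+\nn\,\tfrac{\partial}{\partial\nn}$ and $\Div=\Div_{\Gamma}+\nn\cdot\tfrac{\partial}{\partial\nn}$ and eliminate all normal derivatives: those of $\uu+\uu^{i}$ through the impedance condition $T(\uu+\uu^{i})=-i\alpha\omega\sqrt{\rho}(\uu+\uu^{i})$ and the traction representations \eqref{Tn1}--\eqref{Tn3}, and those of $V_{j}$ through its own impedance condition, so that the datum $B\uu$ is expressed through $V_{j}$ alone. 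The tangential part of the stress contraction is then the surface stress $\upsigma_{_{\rm I}}$ of \eqref{sigmatau}, and a surface integration by parts transfers the remaining tangential derivatives from $V_{j}$ onto it, producing the leading $\Div_{\Gamma}(\cdots)$ term of \eqref{carac2}; splitting into normal and tangential components through $\Pi_{3}$, $\uu_{t}$ and $\uu_{n}$ then collects the remaining terms. Keeping track of how the impedance condition couples normal and tangential derivatives, so that the final datum lives purely in surface quantities, is the delicate bookkeeping; apart from this the computation is routine and follows the pattern of \cite{KressPaivarinta,HaddarKress}.
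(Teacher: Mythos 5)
Your proposal is correct and follows essentially the same route as the paper: continuity of the Dirichlet trace via a regularized second-kind boundary integral equation derived from \eqref{adjintrep2}, replacement of $\uu_{\theta}+\uu^{i}$ by $\uu+\uu^{i}$ in the factorization \eqref{id3}, Green's formula on the shell $\Omega_{\theta}^{*}$, first-order flattening onto $\Gamma$ weighted by $\theta\cdot\nn$, and elimination of normal derivatives through the impedance condition together with a surface integration by parts. The only (cosmetic) difference is that the paper absorbs the $i\alpha\omega\sqrt{\rho}\,(V_{j}\cdot(\uu+\uu^{i}))\nn$ contribution into a single divergence over the shell, using the extension of $\nn$ to a tubular neighborhood and the expansion of $\nn_{\theta}$, where you instead keep it as a surface-difference term treated by the Hadamard formula; both yield the same $(\partial/\partial\nn+H_{\Gamma})$ term, and the normal derivative of $V_{j}$ then cancels between the two contributions rather than being removed by the impedance condition of $V_{j}$ as you suggest.
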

\begin{proof}   We follow the same procedure as in the proof of lemma \ref{identity2}. We denote by $S_{\theta}$ and $K_{\theta}$ the  integral operators on the boundary $\Gamma_{\theta}$ with singular kernels $2\Phi(x,y)$ and $2\transposee{[T_{y}\Phi(x,y)]}$ respectively. Assume that $\alpha>0$. From \eqref{adjintrep2} and the jump relations, it can be deduced that the restriction to $\Gamma$ of the total field $(\uu_{\theta}+\uu^{i})$ solves the boundary integral equation
\begin{equation}\label{inteqimp}\left(\Id-K_{\theta}-i\alpha S_{\theta}\right)(\uu_{\theta}+\uu^{i})_{|\Gamma_{\theta}}=2\uu^{i}_{|\Gamma_{\theta}}.\end{equation}
The operator $K_{\theta}$ is not compact. By regularization technique (see \cite{Kupradze}), here again, we can  modify the above equation  
in order to obtain an integral equation of the second kind which has to be solved for the unkown $\uu_{\theta}+\uu^{i}$ in $\HH^{\frac{1}{2}}(\Gamma)$. From this new equation and the convergence of integral operators as $\theta\to0$, it can be deduce that the total field satifies
\begin{equation}\label{approx3}||\tau_{\theta}(\uu_{\theta}+\uu^{i})_{|\Gamma_{\theta}}-(\uu+\uu^{i})_{|\Gamma}||_{\HH^{\frac{1}{2}}(\Gamma)}\to0,\quad ||\theta||_{\mathscr{C}^2}\to0\end{equation}
Then we can write that
$$\begin{array}{ll}&\displaystyle{\int_{\Gamma_{\theta}}\transposee{[(T_{y}+i\alpha\omega\sqrt{\rho})V(\cdot,y)]}\big(\uu_{\theta}(y)+\uu^{i}(y)\big)ds(y)}\\&\hspace{2cm}=\displaystyle{\int_{\Gamma_{\theta}}\transposee{[(T_{y}+i\alpha\omega\sqrt{\rho})V(\cdot,y)]}\big(\uu(y)+\uu^{i}(y)\big)ds(y)}+o(||\theta||_{\mathscr{C}^2}),\end{array}$$
 in  $\HH(G,\Updelta^*)$ for all compact subset $G$ of $\Omega^c$. 
 Notice that the outer unit normal vector $\nn$ to $\Gamma$ can be extended in a continuously differentiable function, denoted again by $\nn$, on a tubular $$B_{T}=\{z=y+t\nn(y);\; y\in\Gamma,\; t\in[-T;T]\}$$ for some sufficiently small $T$. 
 By the first Green formula \eqref{IPP1} together with the following expansion for $y\in\Gamma$ (see \cite{HaddarKress, PierreHenrot})
 $$\nn(y)\cdot\big(\nn_{\theta}(y)-\nn(y)\big)=-\,\nn(y)\cdot\nabla_{\Gamma}\big(\theta(y)\cdot\nn(y)\big)+o(||h||_{\mathscr{C}^2})=o(||h||_{\mathscr{C}^2})$$
 and the boundary condition for $W$ we obtain
\begin{equation*}\uu_{\theta}-\uu=\int_{\Omega_{\theta}^{*}}\left\{\Div\Big(\upsigma( V_{j})(\uu+\uu^{i})+i\alpha\omega\sqrt{\rho}\,\big(V_{j}\cdot(\uu+\uu_{i})\big)\nn\Big)_{j=1,2,3}\right\}\chi dy+o(||\theta||_{\mathscr{C}^2})\end{equation*}
where $$\Omega_{\theta}^*=\{y\in\Omega_{\theta};y\not\in\Omega\}\cup\{y\in\Omega;y\not\in\Omega_{\theta}\},$$
and $\chi(y)=1$ if $y\in\Omega_{\theta}$ and $y\not\in\Omega$ and $\chi(y)=-1$ if $y\in\Omega$ and $y\not\in\Omega_{\theta}$. 
Approximating the integral over $\Omega_{\theta}^*$ by an integral over $\Gamma$ we obtain that
$$\begin{array}{l}\uu_{\theta}-\uu\\\hspace{3mm}=\displaystyle{\int_{\Gamma}\Div\Big(\upsigma( V_{j})(\uu+\uu^{i})+i\alpha\omega\sqrt{\rho}\,\big(V_{j}\cdot(\uu+\uu_{i})\big)\nn\Big)_{_{j=1,2,3}}(\theta\cdot\nn) ds(y)}+o(||\theta||_{\mathscr{C}^2})\end{array}$$
For $j=1,2,3$, we have $$\begin{array}{lcl}\Div\Big(\upsigma( V_{j})(\uu+\uu^{i})\Big)&=&\upsigma(V_{j}):\big[\nabla(\uu+\uu^{i})\big]-\rho\omega^{2}\big(V_{j}\cdot(\uu+\uu^{i})\big)\vspace{1mm}\\&=&\big[\nabla V_{j}\big]:\upsigma\big(\uu+\uu^{i}\big)-\rho\omega^{2}\big(V_{j}\cdot(\uu+\uu^{i})\big)\vspace{2mm}\\&=&[\nabla_{\Gamma}(V_{j})]:\left(\Pi_{3}\upsigma(\uu+\uu^{i})\Pi_{3}\right)-\rho\omega^{2}\big(V_{j}\cdot(\uu+\uu^{i})\big)\vspace{2mm}\\&&-i\alpha\omega\sqrt{\rho}\left(\dfrac{\partial}{\partial\nn}V_{j}\cdot (\uu+\uu^{i})+[\nabla_{\Gamma}V_{j}]:\left((\uu_{t}+\uu^{i}_{t})\cdot\transposee{\nn}\right)\right)\end{array}$$ 
and 
$$\Div\Big(\big(V_{j}\cdot(\uu+\uu_{i})\big)\nn\Big)=\left(\dfrac{\partial}{\partial\nn}+H_{\Gamma}\right)\big(V_{j}\cdot(\uu+\uu_{i})\big)$$
Collecting the two above equalities we obtain
\begin{equation}\label{bound}\begin{array}{ll}&\Div\Big(\upsigma( V_{j})(\uu+\uu^{i})+i\alpha\omega\sqrt{\rho}\,\big(V_{j}\cdot(\uu+\uu_{i})\big)\nn\Big)\vspace{2mm}\\&\hspace{2cm}=[\nabla_{\Gamma}(V_{j})]:\left(\Pi_{3}\upsigma(\uu+\uu^{i})\Pi_{3}-i\alpha\omega\sqrt{\rho}\left((\uu_{t}+\uu^{i}_{t})\cdot\transposee{\nn}\right)\right)\vspace{2mm}\\&\hspace{2cm}-\rho\omega^{2}\big(V_{j}\cdot(\uu+\uu^{i})\big)+i\alpha\omega\sqrt{\rho}V_{j}\cdot \left(\dfrac{\partial}{\partial\nn}+H_{\Gamma}\right)(\uu+\uu^{i})\end{array}\end{equation}
To conclude we have to express $\Pi_{3}\upsigma(\uu+\uu^{i})\Pi_{3}$ and $\frac{\partial}{\partial\nn}(\uu+\uu^{i})$ in function of the tangential derivatives of $\uu$.
First, we note that
$$\Pi_{3}\upsigma(\uu+\uu^{i})\Pi_{3}=\lambda\Div(\uu+\uu^{i})\Pi_{3}+\mu\, \Pi_{3}\left([\nabla_{\Gamma}\uu]+\transposee{[\nabla_{\Gamma}\uu]}\right)\Pi_{3}$$
Then we use the identity \eqref{Tn1} together with the boundary condition of $\uu$, which gives 
$$\Div(\uu+\uu^{i})=\frac{2\mu}{\lambda+2\mu}\Div_{\Gamma}(\uu+\uu^{i})-i\frac{\alpha}{\lambda+2\mu}\omega\sqrt{\rho}\,\nn\cdot(\uu+\uu^{i}).$$
The identity \eqref{Tn2} yields
$$\left(\nn\times\frac{\partial}{\partial\nn}(\uu+\uu^{i})\right)\times\nn=-[\nabla_{\Gamma}(\uu+\uu^{i})]\nn-i\frac{\alpha}{\mu}\omega\sqrt{\rho}\,\big(\nn\times(\uu+\uu^{i})\big)\times\nn,$$
end the identity \eqref{Tn3} yields
$$\nn\cdot\frac{\partial}{\partial\nn}(\uu+\uu^{i})=-\frac{\lambda}{\lambda+2\mu}\Div_{\Gamma}(\uu+\uu^{i})-i\frac{\alpha}{\lambda+2\mu}\omega\sqrt{\rho}\,\nn\cdot(\uu+\uu^{i}).$$
Substituing all the above identity in \eqref{bound} we obtain the characterization \eqref{carac2}. 
\end{proof}

\begin{theorem} Let $\Gamma$ be analytic. Then the mapping $\mathcal{F}:{\mathscr{C}^2(\Gamma,\R^3)\hspace{-.5mm}\rightarrow\hspace{-.5mm}\LL^2_{s}(S^2)\hspace{-.5mm}\times\hspace{-.5mm}\LL^2_{p}(S^2)}$ is Fr\'echet differentiable at $\theta=0$ with the  Fr\'echet derivative defined for $\xi\in\mathscr{C}^2(\Gamma,\R^3)$ by $$\mathcal{F}'(0)\xi=\vv^{\infty}_{\xi},$$
where $\vv^{\infty}_{\xi}$ is the far-field pattern of the solution $\vv_{\xi}$ to the Navier equation in $\Omega^c$ that satisfies the Kupradze radiation condition and the impedance boundary condition
$$\hspace{-4mm}\begin{array}{ll}&(T+i\alpha\omega\sqrt{\rho})\vv_{\xi}\vspace{2mm}\\&\hspace{.8cm}=\;\Div_{\Gamma}\Big((\xi\cdot\nn)\big\{\upsigma_{_{\rm I}}(\uu+\uu^{i})-i\alpha\omega\sqrt{\rho}\,\nn\cdot\transposee{(\uu_{t}+\uu^{i}_{t})}\big\}\Big)\vspace{2mm}\\&\hspace{1cm}+\;\rho\omega^2(\xi\cdot\nn)\left(\big(1-\dfrac{\alpha^2}{\mu}\big)(\uu_{t}+\uu^{i}_{t})+\big(1-\dfrac{\alpha^2}{\lambda+2\mu}\big)(\uu_{n}+\uu_{n}^{i})\nn\right)\\&\hspace{1cm}+\;i\alpha\omega\sqrt{\rho}(\xi\cdot\nn)\left([\nabla_{\Gamma}(\uu+\uu^{i})]\nn+\dfrac{\lambda}{\lambda+2\mu}\big(\Div_{\Gamma}(\uu+\uu^{i})\big)\nn-(\uu+\uu^{i})H_{\Gamma}\right),\end{array}$$
where the symmetric tensor $\upsigma_{_{\rm I}}(\uu)$ is given by \eqref{sigmatau}.
\end{theorem}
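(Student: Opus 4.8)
The plan is to assemble the statement from the two preceding lemmas in exactly the way Theorem~\ref{Th1} was deduced from Lemmas~\ref{intrepdir} and~\ref{identity2}: the genuine analysis has already been carried out in Lemma~\ref{identity4}, and what remains is a formal identification. Lemma~\ref{identity4} provides, for $\theta$ sufficiently small, an expansion of the form
\begin{equation*}\uu_{\theta}-\uu=-\int_{\Gamma}\transposee{[V(\cdot,y)]}\,B\uu(y)\,ds(y)+o\big(\|\theta\|_{\mathscr{C}^2}\big)\end{equation*}
in $\HH(G,\Updelta^*)$ for every compact $G\subset\Omega^c$, whose leading term depends linearly on $\theta$ only through the scalar factor $\theta\cdot\nn$ occurring in~\eqref{carac2}. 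Writing $B_{\xi}\uu$ for the expression~\eqref{carac2} with $\theta$ replaced by $\xi$, this already exhibits the scattered field map $\theta\mapsto\uu_{\theta}$ as Fr\'echet differentiable at $\theta=0$ in the local $\HH(\,\cdot\,,\Updelta^*)$ topology.

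First I would pass from the near field to the far field. As noted in Section~\ref{Dirichlet}, the far-field extraction is a bounded linear operation that commutes with the limit $|x|\to\infty$, so the local differentiability of $\theta\mapsto\uu_{\theta}$ transfers to Fr\'echet differentiability of $\mathcal{F}:\theta\mapsto F(\Gamma_{\theta})$ at $\theta=0$, with $\mathcal{F}'(0)\xi$ equal to the far-field pattern of the field
\begin{equation*}\vv_{\xi}(x)=-\int_{\Gamma}\transposee{[V(x,y)]}\,B_{\xi}\uu(y)\,ds(y),\qquad x\in\Omega^c.\end{equation*}

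Next I would identify $\vv_{\xi}$ as the solution of an impedance boundary value problem. By its construction $V(x,\cdot)=\Phi(x,\cdot)+W(x,\cdot)$ is, for each $x\in\Omega^c$, a radiating solution of~\eqref{NE} that satisfies the homogeneous impedance condition $(T+i\alpha\omega\sqrt{\rho})V(x,\cdot)=0$ on $\Gamma$; hence $\vv_{\xi}$ is a radiating solution of the Navier equation obeying the Kupradze radiation condition. The integral defining $\vv_{\xi}$ is exactly the representation furnished by Lemma~\ref{intrepimp} for the datum $g=B_{\xi}\uu$, so that lemma yields $(T+i\alpha\omega\sqrt{\rho})\vv_{\xi}=B_{\xi}\uu$ on $\Gamma$. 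Substituting~\eqref{carac2} with $\theta$ replaced by $\xi$, and recalling the symmetric tensor $\upsigma_{_{\rm I}}$ from~\eqref{sigmatau}, reproduces the asserted impedance boundary condition verbatim, whence $\mathcal{F}'(0)\xi=\vv^{\infty}_{\xi}$.

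The entire substantive difficulty is therefore concentrated in Lemma~\ref{identity4}, and the step I expect to dominate the effort is the continuous dependence estimate~\eqref{approx3} for the boundary trace of the total field. This rests on rewriting the first-kind equation~\eqref{inteqimp} as a second-kind equation by regularising the non-compact operator $K_{\theta}$, on the continuity in $\theta\in\mathscr{C}^2(\Gamma,\R^3)$ of the transported operators $\tau_{\theta}(\,\cdot\,)\tau_{\theta}^{-1}$ and of the resulting inverse via a Neumann series, and finally on the delicate volume-to-surface reduction over $\Omega_{\theta}^*$ which, combined with the traction identities~\eqref{Tn1}--\eqref{Tn3} and an integration by parts in the surface divergence, yields the explicit characterisation~\eqref{carac2}. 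Granted these ingredients, the present theorem is the purely formal assembly sketched above, the sole bookkeeping point being to reconcile the transpose of $V$ in the kernels of Lemmas~\ref{identity4} and~\ref{intrepimp}.
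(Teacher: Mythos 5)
Your proposal is correct and follows exactly the paper's route: the paper's own proof is the single line ``direct consequence of Lemmas \ref{intrepimp} and \ref{identity4}'', and you have simply spelled out that assembly (linearity of the leading term in $\xi$ through $\theta\cdot\nn$, passage to the far field, and identification of $B_{\xi}\uu$ as the impedance datum of $\vv_{\xi}$ via Lemma \ref{intrepimp}). You also correctly locate all the substantive work in Lemma \ref{identity4} and flag the only bookkeeping issue (the transpose on $V$ in the two kernels), so nothing further is needed.
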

\begin{proof} This is a direct consequence of Lemmas  \ref{intrepimp} and \ref{identity4}.
\end{proof}
When $\alpha=0$, the integral equation \eqref{inteqimp} is not uniquely solvable but one can prove that the total field $(\uu_{\theta}+\uu^{i})$ is the unique solution of an hypersingular boundary integral equation. Via regularization method on can prove that the estimation \eqref{approx3} is still valid in this case.   For the Neumann problem, we then obtain the characterization:

\begin{theorem} Let $\Gamma$ be analytic. Then the mapping $\mathcal{F}:{\mathscr{C}^2(\Gamma,\R^3)\hspace{-.5mm}\rightarrow\hspace{-.5mm}\LL^2_{s}(S^2)\hspace{-.5mm}\times\hspace{-.5mm}\LL^2_{p}(S^2)}$ is Fr\'echet differentiable at $\theta=0$ with the  Fr\'echet derivative defined for $\xi\in\mathscr{C}^2(\Gamma,\R^3)$ by $$\mathcal{F}'(0)\xi=\vv^{\infty}_{\xi},$$
where $\vv^{\infty}_{\xi}$ is the far-field pattern of the solution $\vv_{\xi}$ to the Navier equation in $\Omega^c$ that satisfies the Kupradze radiation condition and the Neumann boundary condition
$$\begin{array}{ll}T\vv_{\xi}=\Div_{\Gamma}\big((\xi\cdot\nn)\upsigma_{_{\rm N}}(\uu+\uu^{i})\big)+\rho\omega^2(\xi\cdot\nn)(\uu+\uu^{i}),\end{array}$$
where
\begin{equation}\upsigma_{_{\rm N}}(\uu)=\frac{2\lambda\mu}{\lambda+2\mu}(\Div_{\Gamma}\uu)\Pi_{3}+\mu\, \Pi_{3}\left([\nabla_{\Gamma}\uu]+\transposee{[\nabla_{\Gamma}\uu]}\right)\Pi_{3}.\end{equation}
\end{theorem}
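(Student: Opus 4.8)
The plan is to obtain this Neumann characterization as the limiting case $\alpha=0$ of the impedance theorem, isolating the single place where the hypothesis $\alpha>0$ was genuinely used.

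First I would carry out the algebraic reduction of \eqref{carac2} at $\alpha=0$. Every term carrying a factor $i\alpha\omega\sqrt{\rho}$ disappears, the coefficients $(1-\alpha^2/\mu)$ and $(1-\alpha^2/(\lambda+2\mu))$ both become $1$, and the tensor \eqref{sigmatau} collapses to $\upsigma_{_{\rm I}}(\uu)|_{\alpha=0}=\frac{2\lambda\mu}{\lambda+2\mu}(\Div_{\Gamma}\uu)\Pi_{3}+\mu\,\Pi_{3}\big([\nabla_{\Gamma}\uu]+\transposee{[\nabla_{\Gamma}\uu]}\big)\Pi_{3}=\upsigma_{_{\rm N}}(\uu)$. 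Since $\uu=\uu_{t}+\uu_{n}\nn$, the surviving $\rho\omega^2$ contribution is exactly $\rho\omega^2(\theta\cdot\nn)(\uu+\uu^{i})$, so $B\uu$ reduces verbatim to the stated Neumann data with $\xi$ in place of $\theta$.

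The substantive step is to re-establish the continuous-dependence estimate \eqref{approx3} when $\alpha=0$. At $\alpha=0$ the second-kind equation \eqref{inteqimp} is no longer uniquely solvable, since the operator $\Id-K_\theta$ then carries the nontrivial kernel associated with the exterior traction problem. The remedy is to characterize $(\uu_\theta+\uu^i)_{|\Gamma_\theta}$ instead as the unique solution of a uniquely solvable hypersingular boundary integral equation, and then to invoke the continuity part of the analysis of Costabel and Le Lou\"er \cite{CostabelLeLouer}: the pulled-back operators $\tau_\theta(\cdot)\tau_\theta^{-1}$ depend continuously on $\theta\in\mathscr{C}^2(\Gamma,\R^3)$, so their inverse does too through the Neumann series, and the analyticity of $\uu^i$ on $\Gamma_\theta$ then yields \eqref{approx3} exactly as in the impedance case.

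With \eqref{approx3} in hand, the remainder of Lemma \ref{identity4} is insensitive to the value of $\alpha$: the factorization \eqref{id3}, the Taylor expansion, the reduction of the volume integral over $\Omega_\theta^*$ to a boundary integral over $\Gamma$ through the volume-form expansion, and the substitutions using \eqref{Tn1}--\eqref{Tn3} all go through unchanged. Reading Lemma \ref{intrepimp} at $\alpha=0$ --- where $V$ is built from the exterior Neumann Green tensor and the representation is $\vv(x)=-\int_{\Gamma}\transposee{[V(x,y)]}g(y)\,ds(y)$ --- then identifies $\vv_\xi$ as the radiating solution carrying Neumann data $B\uu$, and passing to the far field gives $\mathcal{F}'(0)\xi=\vv^\infty_\xi$. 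The main obstacle is precisely the second step: producing a uniquely solvable integral formulation at the traction resonances and checking that the regularization it requires is still covered by the continuity results of \cite{CostabelLeLouer}.
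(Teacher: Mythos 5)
Your proposal follows essentially the same route as the paper: the paper likewise obtains this theorem by setting $\alpha=0$ in the impedance analysis, noting that the only genuine obstruction is the non-unique solvability of the second-kind equation \eqref{inteqimp} at $\alpha=0$, and remedying it by characterizing $(\uu_\theta+\uu^i)_{|\Gamma_\theta}$ as the unique solution of a regularized hypersingular boundary integral equation so that \eqref{approx3} still holds. Your algebraic reduction of \eqref{carac2} and \eqref{sigmatau} to the stated Neumann data is correct, and you have correctly isolated the one step the paper itself flags as requiring a separate argument.
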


\section{Acknowlegements}
This work was supported by the project SBF 755 "Nanoscale Photonic Imaging".

\small

   \end{document}